\newcommand{\lla}{\llangle}
\newcommand{\rra}{\rrangle}
\newcommand{\lv}{\lVert}
\newcommand{\rv}{\rVert}
\theoremstyle{plain}
\newtheorem{theorem}{Theorem}[section]
\newtheorem*{theorem*}{Theorem}
\newtheorem{lemma}[theorem]{Lemma}
\newtheorem{proposition}[theorem]{Proposition}
\newtheorem{corollary}[theorem]{Corollary}
\newtheorem{remark}[theorem]{Remark}
\newtheorem{problem}[theorem]{Problem}
\newtheorem*{mt*}{Main Theorem}
\newcommand\C{{\mathbb C}}
\renewcommand\phi{{\varphi}}
\newcommand\N{{\mathbb N}}
\newcommand\R{{\mathbb R}}
\newcommand\U{{\mathcal U}}
\renewcommand\H{{\mathcal H}}
\newcommand{\del}{\partial}
\newcommand{\delbar}{{\overline{\del}}}
\newcommand{\cinf}{\mathcal{C}^\infty}
\DeclareMathOperator{\supp}{supp}
\let\inf\undefined
\DeclareMathOperator*{\inf}{inf\vphantom{p}}
\let\sup\undefined
\DeclareMathOperator*{\sup}{sup\vphantom{p}}
\DeclareMathOperator{\img}{Im}
\DeclareMathOperator{\spec}{spec}
\DeclareMathOperator{\im}{{im\,}}
\DeclareMathOperator{\coker}{{coker\,}}
\DeclareMathOperator{\vol}{{Vol}}
\let\phi\varphi
\let\c\overline
\title{$L^2$ Fr\"olicher inequalities}
\author{Francesco Bei}
\address{Dipartimento di Matematica \lq\lq Guido Castelnuovo\rq\rq,
Sapienza Universit\`{a} di Roma, Piazzale Aldo Moro 5,
I-00185 Roma, Italy}
\email{bei@mat.uniroma1.it}
\author{Riccardo Piovani}
\address{Dipartimento di Matematica e Fisica \lq\lq Niccolò Tartaglia\rq\rq,
Universit\`{a} Cattolica del Sacro Cuore, Via della Garzetta 48,
25133 Brescia, Italy}
\email{riccardo.piovani@unicatt.it}
\keywords{$L^2$ Hodge theory, Galois covering, spectral density function}
\thanks{\newline The first-named author was partially supported by 2024 Sapienza research grant “New
research trends in Mathematics at Castelnuovo”. Both authors are also partially supported by INdAM - GNSAGA Project,
codice CUP E53C24001950001.
}
\subjclass[2020]{32Q55, 58J10}
\begin{document}

\begin{abstract}
We prove a Fr\"olicher inequality between $L^2$ Betti and $L^2$ Hodge numbers on normal coverings of compact complex manifolds. This is achieved by building an injection using suitable spectral projectors associated to the self-adjoint operators $(D_h)^2:=(\delbar+\delbar^*+h\del+h\del^*)^2$ for $h\in[0,1]$. With similar techniques, we show that the positivity of the spectrum of the Dolbeault Laplacian implies the positivity of the spectrum of the Hodge Laplacian; moreover, if equality holds in the $L^2$ Fr\"olicher inequality, then we can replace \lq\lq positivity of the spectrum\rq\rq{} with \lq\lq spectral gap at 0\rq\rq{} in the previous statement.
As a by-product, in the case of compact complex manifolds, we find a new proof of the classical Fr\"olicher inequality which does not rely at all on spectral sequences and build an explicit injection from de Rham to Dolbeault cohomology.
\end{abstract}

\maketitle

\section{Introduction}

A fundamental relation between topological and complex invariants of a compact complex manifold $M$ is provided by the Fr\"olicher inequality \cite{Fro}
\begin{equation}\label{equation frolicher inequality}
b^k(M)\le\sum_{p+q=k}h^{p,q}_\delbar(M)
\end{equation}
for all $k\in\N$, where $b^k(M)$ and $h^{p,q}_\delbar(M)$ denote respectively Betti and Hodge numbers.
The only proof known to the authors is the original one, which relies on spectral sequences. In this paper, we generalise \eqref{equation frolicher inequality} to normal coverings of compact complex manifolds \cite{Ati,G,Lu}. The proof is completely different from the original: it is analytical and yields interesting new consequences in both the setting of coverings and that of compact manifolds.

We begin introducing the setting and providing some motivation for the study of \eqref{equation frolicher inequality}.
Let us recall that on a complex manifold the exterior derivative on the space of smooth $(p,q)$-forms $A^{p,q}$ decomposes as $d=\del+\delbar$, so that $d^2=0$ is equivalent to the three relations $\del^2=\delbar^2=\del\delbar+\delbar\del=0$, which define Dolbeault, $\del$, Aeppli and Bott-Chern cohomology spaces
\begin{align*}
&H^{p,q}_\delbar(M):=\frac{A^{p,q}\cap\ker\delbar}{\im\delbar},&& H^{p,q}_\del(M):=\frac{A^{p,q}\cap\ker\del}{\im\del},\\
&H^{p,q}_A(M):=\frac{A^{p,q}\cap\ker\del\delbar}{\im\del+\im\delbar},&&H^{p,q}_{BC}(M):=\frac{A^{p,q}\cap\ker\del\cap\ker\delbar}{\im\del\delbar}.
\end{align*}
When the manifold is compact, the dimensions of these cohomology spaces are finite and will be denoted respectively by $h^{p,q}_\delbar(M)$, $h^{p,q}_\del(M)$, $h^{p,q}_A(M)$ and $h^{p,q}_{BC}(M)$. 
On a complex manifold, the identity map induces the following diagram of maps.
\begin{equation}\label{equation diagram maps cohomology}
\begin{tikzcd}
{}&{H^{\bullet,\bullet}_{BC}(M)}\arrow[dl]\arrow[d]\arrow[dr] &{}\\
{{H^{\bullet,\bullet}_\del(M)}}\arrow[dr]&{H^\bullet_{dR}(M;\C)}\arrow[d] &{H^{\bullet,\bullet}_\delbar(M)}\arrow[dl]\\
{}&{H^{\bullet,\bullet}_A(M)}&{}
\end{tikzcd}
\end{equation}
When all these maps are isomorphisms, the complex manifold is said to satisfy the $\del\delbar$-Lemma, and by \cite[Lemma 5.15]{DGMS} this is equivalent to 
\[
\im\del\delbar=\ker\del\cap\ker\delbar\cap\im d.
\]
It is well known that compact K\"ahler manifolds satisfy the $\del\delbar$-Lemma.

Angella and Tomassini \cite{AT} proved that on a compact complex manifold $M$
\begin{equation}\label{equation angella tomassini inequality hodge abc}
h^{p,q}_\delbar(M)+h^{p,q}_\del(M)\le h^{p,q}_{A}(M)+h^{p,q}_{BC}(M)
\end{equation}
for all $p,q\in\N$, so that the combination of inequalities \eqref{equation frolicher inequality}, \eqref{equation angella tomassini inequality hodge abc} produces
\begin{equation}\label{equation angella tomassini inequality betti abc}
2b^k(M)\le\sum_{p+q=k}h^{p,q}_{A}(M)+h^{p,q}_{BC}(M)
\end{equation}
for all $k\in\N$.
They also proved that equality in \eqref{equation angella tomassini inequality betti abc} holds for all $k\in\N$ if and only if the $\del\delbar$-Lemma holds. This provides a useful numerical condition to check the validity of the $\del\delbar$-Lemma.

If a complex manifold $M$ is endowed with a Hermitian metric $g$, there is a well defined $L^2$ inner product on the space of smooth $(p,q)$-forms $A^{p,q}$
\[
\lla \alpha,\beta\rra_{L^2}:=\int_M g(\alpha,\beta)\vol=\int_M\alpha\wedge*\overline{\beta}.
\]
In the above formula, $g$ denotes the Hermitian metric extended on the space of $(p,q)$-forms, $\vol$ is the standard volume form $\frac{\omega^n}{n!}$, where $\omega$ is the fundamental 2-form of the metric and $n$ is the complex dimension of the complex manifold, and finally $*$ indicates the $\C$-linear Hodge $*$ operator. We define the Hilbert spaces of $L^2$ complexified $k$- or $(p,q)$-forms as the completion of the space of smooth compactly supported complexified $k$- or $(p,q)$-forms with respect to the $L^2$ inner product. These spaces will be denoted by $L^2\Lambda^k_\C$ and $L^2\Lambda^{p,q}$.
The $L^2$ formal adjoints of the operators $d$, $\del$ and $\delbar$ are then respectively $d^*:=-*d*$, $\del^*:=-*\delbar*$ and $\delbar^*:=-*\del*$. Let us define, on the space of complexified forms $A^\bullet_\C:=\oplus_{k\ge0}A^k_\C$, the elliptic and formally self-adjoint differential operators
\begin{align*}
D_1:=d+d^*, && D_0:=\delbar+\delbar^*,
\end{align*}
so that 
\begin{align*}
D_1^2:=dd^*+d^*d, && D_0^2:=\delbar\delbar^*+\delbar^*\delbar
\end{align*}
are respectively the Hodge and Dolbeault Laplacians.
If the complex manifold is compact, Hodge theory implies that
\begin{align*}
\ker D_1^2\simeq H^\bullet_{dR}(M;\C), && \ker D_0^2\simeq H^{\bullet,\bullet}_\delbar(M).
\end{align*}
These two kernels are called, respectively, spaces of de Rham and Dolbeault harmonic forms.

If the Hermitian manifold $(\widetilde M,\tilde g)$ is not compact, one may consider the spaces of $L^2$ de Rham and Dolbeault harmonic forms
\begin{align*}
\ker D_1^2\cap L^2\Lambda^\bullet_\C, && \ker D_0^2\cap L^2\Lambda^{\bullet,\bullet},
\end{align*}
but their vector space dimensions are in general not finite. However, when $\pi:\widetilde M\to M$ is a normal $\Gamma$-covering (\textit{i.e.}, the group of deck transformations $\Gamma$ acts transitively on all the fibres and $\widetilde M$ is connected) (\textit{e.g.}, the universal covering) of a compact complex manifold $M\simeq {\widetilde M}/{\Gamma}$, and $\tilde g=\pi^*g$ is the pull-back of any Hermitian metric $g$ on $M$, by \cite{Ati} one can \lq\lq renormalise\rq\rq{} their dimensions to obtain finite invariants
\begin{align*}
b^k_{\Gamma}(M):=\dim_\Gamma(\ker D_1^2\cap L^2\Lambda^k_\C)\in\R_{\ge0}, && h^{p,q}_{\delbar,\Gamma}(M):=\dim_{\Gamma}(\ker D_0^2\cap L^2\Lambda^{p,q})\in\R_{\ge0},
\end{align*}
which only depend on $M$ and on the covering (no metric dependence). The invariant $h^{p,q}_{\del,\Gamma}$ is defined similarly. This renormalised dimension is called $\Gamma$-dimension or Von Neumann dimension, whose definition will be recalled in Section \ref{section l2 frolicher}. These invariants are called $L^2$ Betti and $L^2$ Hodge numbers and, if $\pi:M\to M$ is the trivial covering, coincide with the usual Betti and Hodge numbers. It has been proved in \cite{Dod} that $b^k_{\Gamma}(M)$ is a homotopy invariant of the pair $(M,\Gamma)$. For a complex manifold $M$, it follows by \cite[Corollary 11.4]{K} that $h^{p,0}_{\delbar,\Gamma}(M)$ is a bimeromorphic invariant. These invariants have been largely studied in both the Riemannian and the complex Hermitian setting and turned out to be key ingredients for many interesting applications, see, \textit{e.g.}, \cite{Lu,G,K}.

Holt and the second author \cite{HP} have shown that, in the same setting as before, the mininal and maximal closed extensions of the operator $\del\delbar$ coincide, and as a consequence there are well defined $L^2$ Aeppli and Bott-Chern numbers $h^{p,q}_{A,\Gamma}(M)$, $h^{p,q}_{BC,\Gamma}(M)$. They also generalised \eqref{equation angella tomassini inequality hodge abc} to the $L^2$ case, proving
\begin{equation}\label{equation l2 inequality hodge abc}
h^{p,q}_{\delbar,\Gamma}(M)+h^{p,q}_{\del,\Gamma}(M)\le h^{p,q}_{A,\Gamma}(M)+h^{p,q}_{BC,\Gamma}(M)
\end{equation}
for all $p,q\in\N$. The proof uses Varouchas exact sequences as in \cite{AT}, for which Von Neumann dimension has good properties. Motivated by the above inequality and by the results of \cite{AT}, in \cite[Section 10.2]{HP} it is asked if a $L^2$ version of the Fr\"olicher inequality holds.

In the present paper we answer this question by establishing the following result.
\begin{theorem}[{See Theorem \ref{theorem l2 frolicher}}]\label{theorem intro l2 frolicher inequality}
    Let $\pi:\widetilde M\to M$ be a normal $\Gamma$-covering of a compact complex manifold. Then for all $k\in\N$
    \begin{equation}\label{equation intro l2 frolicher inequality}
    b^k_\Gamma(M)\le\sum_{p+q=k}h^{p,q}_{\delbar,\Gamma}(M).
    \end{equation}
\end{theorem}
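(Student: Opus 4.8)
The plan is to run the whole family of formally self adjoint elliptic operators $D_h=\delbar+\delbar^*+h(\del+\del^*)$, $h\in[0,1]$, which interpolates between the Dolbeault operator $D_0$ and the de Rham operator $D_1=\del+\delbar+\del^*+\delbar^*=d+d^*$, and to compare the $\Gamma$-dimensions of the degree $k$ parts of their kernels through the spectral projectors of $D_0^2$. First I would record the algebraic backbone. Setting $T_h:=\delbar+h\del$, the relations $\delbar^2=\del^2=\del\delbar+\delbar\del=0$ give $T_h^2=0$, so $(L^2\Lambda^{\bullet}_\C,T_h)$ is an elliptic complex with $T_h^*=\delbar^*+h\del^*$ and $D_h^2=T_hT_h^*+T_h^*T_h$. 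Since $\widetilde M$ is complete, $D_h$ is essentially self adjoint and $\ker D_h^2=\ker D_h$; as $T_h\omega$ and $T_h^*\omega$ have complementary degrees $k+1$ and $k-1$, the condition $D_h\omega=0$ on a pure degree $k$ form forces $T_h\omega=T_h^*\omega=0$, so that $\ker D_h^2\cap L^2\Lambda^k_\C$ is, by $L^2$ Hodge theory, isomorphic to the reduced $L^2$ cohomology $\overline H^k_{T_h}$.

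The next step is $h$-independence for $h\neq 0$. Let $\tau_h$ be multiplication by $h^{q}$ on $L^2\Lambda^{p,q}$: it is a bounded, $\Gamma$-equivariant operator with bounded inverse, and a direct computation gives $\tau_h T_h\tau_h^{-1}=h\,d$. Thus $\tau_h$ is a $\Gamma$-equivariant isomorphism of complexes from $(L^2\Lambda^\bullet_\C,T_h)$ onto $(L^2\Lambda^\bullet_\C,d)$, up to the harmless scalar $h$, and induces an isomorphism of Hilbert $\Gamma$-modules $\overline H^k_{T_h}\cong \overline H^k_{dR}$. Since Von Neumann dimension is invariant under $\Gamma$-equivariant bounded isomorphisms with bounded inverse, $\dim_\Gamma(\ker D_h^2\cap L^2\Lambda^k_\C)=b^k_\Gamma(M)$ for every $h\in(0,1]$.

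The heart of the matter, and the step I expect to be the main obstacle, is a quantitative estimate turning the spectral projector of $D_0^2$ into an injection. Write $\Delta_\del:=\del\del^*+\del^*\del$ and $\Delta_{\delbar}:=\delbar\delbar^*+\delbar^*\delbar=D_0^2$, fix $\lambda>0$, and take $0\neq\omega\in\ker D_h^2\cap L^2\Lambda^k_\C$. From $T_h\omega=T_h^*\omega=0$ one reads off $\delbar\omega=-h\del\omega$ and $\delbar^*\omega=-h\del^*\omega$, whence
\[
\langle D_0^2\omega,\omega\rangle=\|\delbar\omega\|^2+\|\delbar^*\omega\|^2=h^2\big(\|\del\omega\|^2+\|\del^*\omega\|^2\big)=h^2\langle\Delta_\del\omega,\omega\rangle .
\]
Because $\Delta_\del-\Delta_{\delbar}$ is a first order operator and the $\Gamma$-covering has bounded geometry, there are constants $C_1,C_2>0$ independent of $h$ with $\langle\Delta_\del\omega,\omega\rangle\le C_1\langle D_0^2\omega,\omega\rangle+C_2\|\omega\|^2$; feeding this back and choosing $C_1h^2\le\tfrac12$ gives $\langle D_0^2\omega,\omega\rangle\le 2C_2h^2\|\omega\|^2$. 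Hence, once $2C_2h^2<\lambda$, no nonzero such $\omega$ can lie in $\im\chi_{(\lambda,\infty)}(D_0^2)$, so the bounded $\Gamma$-equivariant map $\chi_{[0,\lambda]}(D_0^2)\colon \ker D_h^2\cap L^2\Lambda^k_\C\to\im\chi_{[0,\lambda]}(D_0^2)$ is injective, and therefore $\dim_\Gamma(\ker D_h^2\cap L^2\Lambda^k_\C)\le\dim_\Gamma\im\chi_{[0,\lambda]}(D_0^2)$.

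Finally I would assemble the pieces. For each $\lambda>0$ pick $h>0$ small enough that the injectivity holds; together with the $h$-independence this yields $b^k_\Gamma(M)\le\dim_\Gamma\im\chi_{[0,\lambda]}(D_0^2)$ for every $\lambda>0$. Letting $\lambda\downarrow 0$ and invoking normality of the $\Gamma$-trace, the spectral density function $\dim_\Gamma\im\chi_{[0,\lambda]}(D_0^2)$ decreases to $\dim_\Gamma\ker D_0^2=\sum_{p+q=k}h^{p,q}_{\delbar,\Gamma}(M)$, which is exactly \eqref{equation intro l2 frolicher inequality}. The delicate points to secure are the uniformity of the constants $C_1,C_2$ (via bounded geometry of the covering) and the soft functional-analytic facts about Von Neumann dimension, namely its monotonicity under $\Gamma$-injections and the continuity of the spectral density function at $0$.
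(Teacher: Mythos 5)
Your proposal is correct and shares the overall architecture of the paper's argument: the interpolating family $D_h=\delbar+\delbar^*+h(\del+\del^*)$, the $h$-independence of $\dim_\Gamma(\ker D_h^2\cap L^2\Lambda^k_\C)=b^k_\Gamma(M)$ for $h>0$ via the weight operator (the paper uses multiplication by $h^p$ rather than $h^q$, but both conjugate $\delbar+h\del$ into a multiple of $d$), the injection of $\ker D_h^2$ into a low-energy spectral subspace $\im\chi_{[0,\lambda]}(D_0^2)$, monotonicity of $\dim_\Gamma$ under $\Gamma$-equivariant injections, and the limit $\lambda\downarrow 0$ via right-continuity of the spectral density function. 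Where you genuinely diverge is the key injectivity step. The paper first proves an abstract functional-analytic theorem (its Theorem \ref{theorem injective projection}): if $D_h^2\to D_0^2$ in the norm resolvent sense, then for $\sigma<\tau$ and $h$ small the projection $E_{0,\tau}\colon\im(E_{h,\sigma})\to\im(E_{0,\tau})$ is injective; the norm resolvent convergence is then supplied by the graph-norm estimate $\lv(D_h-D_0)\phi\rv\le hC(\lv\phi\rv+\lv D_0\phi\rv)$ on the bounded-geometry covering. You instead prove injectivity directly by an energy estimate on harmonic forms: from $T_h\omega=T_h^*\omega=0$ you get $\langle D_0^2\omega,\omega\rangle=h^2\langle\Delta_\del\omega,\omega\rangle$, and the uniform equivalence of the graph norms of $\del+\del^*$ and $\delbar+\delbar^*$ (which is exactly the ingredient the paper also uses, and which is legitimate on a normal covering by Shubin's elliptic estimates; your remark that $\Delta_\del-\Delta_{\delbar}$ is first order is also correct since the two principal symbols coincide) yields $\lv D_0\omega\rv^2\le 2C_2h^2\lv\omega\rv^2$, forcing $\omega\perp\im\chi_{(\lambda,\infty)}(D_0^2)$ to be the only obstruction. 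Your route is more elementary and self-contained --- it avoids resolvents entirely --- but it only produces the injection from the kernel ($\sigma=0$), whereas the paper's abstract theorem compares arbitrary spectral subspaces $\im(E_{h,\sigma})\to\im(E_{0,\tau})$ for $0\le\sigma<\tau$, giving the full inequality of spectral density functions $N^k_{h,\Gamma}(\sigma)\le N^k_{0,\Gamma}(\tau)$ and a reusable tool (e.g.\ for the deformation problems in the last section). For the Fr\"olicher inequality itself, $\sigma=0$ suffices, so your argument is complete; the only points to write out carefully are the ones you already flag, plus the elliptic regularity statement that $\omega\in\ker D_h^2$ lies in every uniform Sobolev space $W^{s,2}$, so that all the pairings $\langle\Delta_\del\omega,\omega\rangle$ and $\langle D_0^2\omega,\omega\rangle$ are legitimate.
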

The classical Fr\"olicher inequality \eqref{equation frolicher inequality} is proved via the so called Fr\"olicher spectral sequence, which definition heavily relies on the structure of double complex of $(A^{\bullet,\bullet},\del,\delbar)$.
This approach seems difficult to generalise to the $L^2$ setting for the following two reasons: first, we are interested in the $L^2$ reduced cohomology (since it is naturally isomorphic to the space of $L^2$ harmonic forms) instead of the usual $L^2$ cohomology; second, we have to take care of the domains of our operators (\textit{e.g.}, a $k$-form in the domain of $d$ is not necessarily in the domains of either $\del$ or $\delbar$).
We were therefore led to build a proof of the $L^2$ Fr\"olicher inequality which did not rely at all on  spectral sequences.
Being these $L^2$ invariants defined through the spaces of $L^2$ harmonic forms, we first looked for a new proof of the original Fr\"olicher inequality on compact complex manifolds which used harmonic forms.

The idea is the following.
As in \cite{Po} and the references thereof, we consider a smooth family of elliptic and formally self-adjoint differential operators on $A^\bullet_\C$ joining $D_0$ to $D_1$, namely
\[
D_h:=\delbar+\delbar^*+h\del+h\del^*
\]
for $h\in[0,1]$. It follows from well known results that, on a compact manifold, the dimension of the kernel of $D_h$ is upper-semi-continuous in $h$ (see, \textit{e.g.}, \cite[Chapter 4, Theorem 4.3]{KM} with complex and Hermitian structures independent of $t=h$, or more generally \cite[Chapter XI.4, Theorem 4.1]{GGK}), meaning that
\begin{equation}\label{equation intro upper semi continuity} 
\dim(\ker D_0^2)\ge\dim(\ker D_h^2)
\end{equation}
for $h$ sufficiently close to 0. We are then left to show that
\begin{equation}\label{equation intro isomorphism kernel popovici}  
\ker D_h^2\simeq\ker D_1^2
\end{equation}
for all $h\in(0,1)$, which follows as in \cite[Corollary 2.9]{Po}. One defines $\theta_h:A^{p,q}\to A^{p,q}$ as $\theta_h(\alpha):=h^p\alpha$ and extends to $A^k_\C$ by linearity. Then $d_h:=\delbar+h\del=\theta_hd\theta_h^{-1}$ satisfies $d_h^2=0$ and defines a cohomology. Therefore
\[
\ker D_1^2\simeq\frac{\ker d}{\im d}\overset{%\theta_h
}\simeq\frac{\ker d_h}{\im d^h}\simeq\ker D_h^2,
\]
where the first and last isomorphisms follow by standard Hodge theory and the second one is induced by $\theta_h$.
This completes the new analytical proof of the original Fr\"olicher inequality \eqref{equation frolicher inequality} on a compact complex manifold.

The situation on a normal $\Gamma$ covering $\pi:\widetilde M\to M$ of a compact complex manifold $M$ is more complicated. It is not difficult to replace \eqref{equation intro isomorphism kernel popovici} with a similar $\Gamma$-equivariant isomorphism between the spaces of $L^2$ harmonic forms, but since the spectrum of $D_h^2$, as a self-adjoint operator between the Hilbert spaces of $L^2$ forms, is no more discrete on the non-compact manifold $\widetilde M$, we lose completely the upper-semi-continuity \eqref{equation intro upper semi continuity}. We first tried to replace it by building an injection
\begin{equation}\label{equation intro injection kernel l2} 
\ker(D_h^2)\cap L^2\Lambda^\bullet_\C\to\ker(D^2_0)\cap L^2\Lambda^\bullet_\C,
\end{equation}
but we found it possible only when the image of $D_0^2$ on $L^2\Lambda^\bullet_\C$ is closed (see Corollary \ref{corollary injective projection}). Finally, by considering the spectral family associated to $D_h^2$, we obtained a more suitable injection using some spectral projectors (see Theorem \ref{theorem injective projection} and Lemma \ref{lemma covering convergence norm resolvent sense}), allowing us to get an inequality for the spectral density functions (see Proposition \ref{proposition l2 frolicher spectral density function}), which, passing to the limit, proves Theorem \ref{theorem intro l2 frolicher inequality}. %\textcolor{blue}{espandere questa parte? ricordare che il teorema 3.1 è molto generale}

We pause for a moment to focus on some further consequences of the techniques developed in the proof of Theorem \ref{theorem intro l2 frolicher inequality}. First, the functional analytic Theorem \ref{theorem injective projection} is very general and will likely find other applications (cf. Subsection \ref{subsection deformations}). Second, the inequality of spectral density functions of Proposition \ref{proposition l2 frolicher spectral density function} has the following important corollary.
\begin{corollary}[{See Theorems \ref{theorem positive spectrum},  \ref{theorem spectral gap}}]\label{corollary introduction spectral}
Let $\pi:\widetilde M\to M$ be a normal $\Gamma$-covering of a compact complex manifold. If the Dolbeault Laplacian has entirely positive spectrum, then the same holds for the Hodge Laplacian. Moreover, if equality holds in \eqref{equation intro l2 frolicher inequality} and the Dolbeault Laplacian has a spectral gap at  $0$, then the same spectral gap holds for the Hodge Laplacian.
\end{corollary}

Back to the case of the trivial covering $\pi: M\to M$, since the image of $D_0^2$ on $L^2\Lambda^\bullet_\C$ is closed, \eqref{equation intro injection kernel l2} holds. Therefore, once we fix a Hermitian metric $g$, we can construct an explicit injection from De Rham cohomology to Dolbeault cohomology. Again, let us remark that this injection is built without any need of spectral sequences.
\begin{corollary}[{See Corollary \ref{corollary compact injection cohomology}}]
Let $(M,g)$ be a compact Hermitian manifold. Then there is an explicit injection
    \[
    H^k_{dR}(M)\to \bigoplus_{p+q=k}  H^{p,q}_{\delbar}(M).
    \]
\end{corollary}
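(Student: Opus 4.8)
The plan is to produce the injection as a composition of three explicit, total-degree--preserving maps, drawing on the two facts already in hand: the Hodge identifications on the compact manifold and the isomorphism \eqref{equation intro isomorphism kernel popovici} between the kernels of $D_h$ and $D_1$. First I would fix the Hermitian metric $g$ together with a parameter $h\in(0,1)$, and record the canonical Hodge isomorphisms $\ker D_1^2\cap A^k_\C\cong H^k_{dR}(M;\C)$ and $\ker D_0^2\cap A^{p,q}\cong H^{p,q}_\delbar(M)$, each sending a harmonic form to its class (here de Rham cohomology is taken with complex coefficients, so that $\ker D_1^2$ matches $H^\bullet_{dR}(M;\C)$). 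The structural observation that makes the bookkeeping work is that $D_h^2$ preserves the total degree: writing $D_h=A+B$ with $A=\delbar+h\del$ and $B=\delbar^*+h\del^*$, the degree-raising-by-two part of $D_h^2$ is $A^2=(\delbar+h\del)^2=h(\delbar\del+\del\delbar)=0$ and, dually, $B^2=0$, so $D_h^2=AB+BA$ shifts the total degree by $0$. Hence $\ker D_h^2=\bigoplus_k(\ker D_h^2\cap A^k_\C)$, and likewise $\ker D_0^2$ is even bigraded.

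Next I would assemble the composition. Since $M$ is compact, $D_0^2$ is elliptic with closed image in $L^2\Lambda^\bullet_\C$, so Corollary \ref{corollary injective projection} supplies the injection \eqref{equation intro injection kernel l2}, which on the compact manifold reduces to an injection $\iota_h\colon\ker D_h^2\hookrightarrow\ker D_0^2$. As $\iota_h$ is built from the spectral projector of $D_0^2$ onto its zero eigenspace, and $D_0^2$ preserves the total degree while the $L^2$ product splits over total degrees, $\iota_h$ respects the grading and restricts to $\ker D_h^2\cap A^k_\C\hookrightarrow\ker D_0^2\cap A^k_\C$. Combining this with the degree-preserving isomorphism \eqref{equation intro isomorphism kernel popovici} and the Hodge isomorphisms above gives, for each $k$,
\begin{align*}
H^k_{dR}(M;\C) &\cong \ker D_1^2\cap A^k_\C \cong \ker D_h^2\cap A^k_\C \overset{\iota_h}{\hookrightarrow} \ker D_0^2\cap A^k_\C \\
&= \bigoplus_{p+q=k}\bigl(\ker D_0^2\cap A^{p,q}\bigr) \cong \bigoplus_{p+q=k} H^{p,q}_\delbar(M),
\end{align*}
which is the desired injection. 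It is explicit once $g$ and $h$ are chosen, being the concatenation of Hodge isomorphisms, the identification of \eqref{equation intro isomorphism kernel popovici}, and an orthogonal projection, and it uses no spectral sequence.

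The argument is in essence a bookkeeping of maps furnished by the earlier results, so the point that must be handled with care is not existence but grading compatibility: one must ensure that each of the three maps respects the splitting by total degree, so that a de Rham class in degree $k$ is sent into $\bigoplus_{p+q=k}H^{p,q}_\delbar(M)$ and not into forms of other total degrees. For $\iota_h$ this follows from the orthogonality of forms of distinct degree together with the degree-invariance of $D_0^2$; for the Hodge isomorphisms it is automatic; and for \eqref{equation intro isomorphism kernel popovici} it rests precisely on the vanishing $A^2=B^2=0$ computed above, which forces $D_h^2$ to preserve the total degree, so that its kernel is graded and the identification can be taken degree by degree. This degree-preservation is the one computation I would single out and verify in full, the remainder being a direct composition.
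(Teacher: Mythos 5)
Your proposal is correct and follows essentially the same route as the paper: the paper specialises Corollary \ref{corollary injection l2 cohomology} to the trivial covering, composing the isomorphism \eqref{equation isomorphism de rham dh}, the Hodge identification $L^2\bar H^k_{d_h,\Gamma}(M)\simeq\ker(D_h^2)^k$, the injective projection of Corollary \ref{corollary injective projection} (available since $\im(D_0^2)^k$ is closed on a compact manifold), and the Hodge isomorphism onto $\bigoplus_{p+q=k}H^{p,q}_{\delbar}(M)$ --- exactly your chain, with your detour through $\ker D_1^2$ being the same composite in different packaging. Your explicit check that $D_h^2$ preserves total degree is a point the paper uses implicitly when it restricts to $(D_h^2)^k$, so making it visible is a reasonable addition rather than a deviation.
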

We also notice that, on a compact complex manifold $M^n$, the other Fr\"olicher relation \cite{Fro}
\begin{equation}\label{equation intro frolicher relation euler}
\sum_{k=1}^{2n}(-1)^kb^k(M)=\sum_{p,q=1}^n(-1)^{p+q}h^{p,q}_\delbar(M),
\end{equation}
originally proved using spectral sequences, can alternatively be proved by perturbation theorems for Fredholm operators \cite[Chapter XI.4, Theorem 4.1]{GGK} applied to the family $D_h$. A generalised $L^2$ Fr\"olicher relation \eqref{equation intro frolicher relation euler} for $L^2$ Betti and $L^2$ Hodge numbers was proved in \cite{Bei-Von}.

These notes are organised as follows. In Section \ref{section preliminaries} we briefly recall some preliminaries about $L^2$ Hodge theory for elliptic complexes of first order differential operators, manifolds of bounded geometry and Sobolev spaces. In Section \ref{section projectors} we prove a general result of functional analysis:  convergence in the norm resolvent sense of a family of non-negative self-adjoint operators allows to build a key injection using spectral projectors. In Section \ref{section l2 frolicher} we consider the case of a normal covering of a compact complex manifold, where we prove the convergence in the norm resolvent sense of $(D_h)^2\to(D_0)^2$ and thus obtain the $L^2$ Fr\"olicher inequality. We also generalise to the $L^2$ setting the inequality $b^2(M)\le 2h^{0,2}_\delbar(M)+h^{1,1}_{BC}(M)$ due to Kodaira and Spencer, proving
\[
b^2_\Gamma(M)\le 2h^{0,2}_{\delbar,\Gamma}(M)+h^{1,1}_{BC,\Gamma}(M),
\]
in Theorem \ref{theorem kodaira spencer l2}.
Finally, in Section \ref{section open problems} we list a couple of open problems.

\medskip\medskip
\noindent{\em Acknowledgments.} 
The second author would like to thank Daniele Angella, Anna Fino, Jonas Stelzig and Adriano Tomassini for useful conversations.

\section{Preliminaries on $L^2$ Hodge theory and bounded geometry}\label{section preliminaries}

\subsection{$L^2$ Hodge theory for complete metrics}
Given a Riemannian manifold $(M^m,g)$ and $n+1$ Hermitian vector bundles $(E_j,h_j)$, $j=0,\dots,n$, we consider a \emph{complex} of $\C$-linear differential operators $D_j$
\[
\cinf(M,E_0)\overset{D_0}{\longrightarrow}\cinf(M,E_1)\overset{D_1}{\longrightarrow}\dots\overset{D_{n-2}}{\longrightarrow}\cinf(M,E_{n-1})\overset{D_{n-1}}{\longrightarrow}\cinf(M,E_{n}),
\]
namely $D_{j+1}\circ D_{j}=0$ for all $j$. Here and later in these notes $\cinf(M,E)$ denotes smooth sections from $M$ to $E$. The complex is said to be \emph{elliptic} if it induces an exact sequence of principal symbols, namely if for all $j$
\[
\im\sigma_{D_j}(x,\xi)=\ker\sigma_{D_{j+1}}(x,\xi),
\]
for all $x\in M$ and $0\ne\xi\in T_x^*M$, where $\sigma_{D_j}(x,\xi)$ is seen as a linear map $(E_j)_x\to(E_{j+1})_x$. We recall that a differential operator $P$ is \emph{elliptic} if its principal symbol $\sigma_P(x,\xi)$ is an isomorphism for all $x\in M$ and $0\ne\xi\in T_x^*M$.
We will restrict to the case where all $D_j$ are differential operators of order 1. Then the complex is elliptic if and only if all the Laplacians
\[
\Delta_j:=D_j^*D_j+D_{j-1}D_{j-1}^*:\cinf(M,E_j)\to \cinf(M,E_j)
\]
are elliptic, where $D_j^*$ denotes the $L^2$ formal adjoint of $D_j$, if and only if the direct sum operator
\[
D:=D_0\oplus (D_0^*+D_1)\oplus\dots\oplus(D_{n-2}^*+D_{n-1})\oplus D_{n-1}^*:\cinf(M,E_\bullet)\to \cinf(M,E_\bullet)
\]
is elliptic. This last fact follows from
\[
D^2=\Delta:=\oplus_{j=0}^n\Delta_j.
\]
Notice that $D$ and $\Delta_j$ are formally self-adjoint.

Denote by $L^2(M,E_j)$ the space of $L^2$ sections of $E_j$, namely the completion of smooth compactly supported sections  $\cinf_c(M,E_{j})$ with respect to the $L^2$ norm $(\int_Mh_j(\cdot,\cdot)\gamma)^\frac12$, where $\gamma$ denotes the standard Riemannian density. Let us assume that the metric $g$ is complete, and that the norm of the principal symbol of the first order differential operator $D$ is uniformly bounded as follows: there is $C>0$ such that
\[
|\sigma_{D}(x,\xi)|\le C(1+|\xi|)
\]
for all $x\in M$. Then by \cite[Theorem 2.2]{Che}, the restrictions to $\cinf_c(M,E_{\bullet})$ of $D$ and of every power $D^k$, for $k\in\N$, are \emph{essentially self-adjoint}, namely they have a unique self-adjoint extension as unbounded operators $L^2(M,E_\bullet)\to L^2(M,E_\bullet)$. One can similarly prove that, under the same assumptions, the minimal and the maximal closed extensions of the operators $D_j$ and $D_j^*$ coincide \cite{Gaf}. In the following, we will abuse notation by indicating this unique choice of closed $L^2$ extensions by the same symbols $D$, $D^k$, $\Delta$, $D_j$, $D_j^*$, $\Delta_j$. We refer to \cite{HP} and the references therein for a review of the theory of unbounded operators between Hilbert spaces and minimal and maximal closed extensions of differential operators.

Under the same assumptions, the above elliptic complex gives rise to a \emph{Hilbert complex} \cite{BL} of closed and densely defined unbounded operators
\[
L^2(M,E_0)\overset{D_0}{\longrightarrow}L^2(M,E_1)\overset{D_1}{\longrightarrow}\dots\overset{D_{n-2}}{\longrightarrow}L^2(M,E_{n-1})\overset{D_{n-1}}{\longrightarrow}L^2(M,E_{n}),
\]
producing $L^2$-orthogonal Hodge decompositions \cite[Lemma 2.1]{BL}
\begin{align*}
L^2(M,E_j)&=\ker (\Delta_j)\oplus\overline{\im (D_{j-1})}\oplus\overline{\im (D_j^*)},\\
\ker (D_j)&=\ker (\Delta_j)\oplus\overline{\im (D_{j-1})},
\end{align*}
where
\[
\ker (D_j)\cap\ker (D_{j-1}^*)=\ker (\Delta_j)\subseteq \cinf(M,E_j).
\]
Here the equality is \cite[Proposition 7]{AV}, while the inclusion is elliptic regularity. Defining the reduced cohomology space as
\[
L^2\bar H (M,E_j):=\frac{\ker (D_j)}{\overline{\im(D_{j-1})}},
\]
the $L^2$ Hodge decomposition above implies that the identity map induces an isomorphism
\[
\ker (\Delta_j)\to L^2\bar H (M,E_j).
\]

\subsection{Bounded geometry and Sobolev spaces}

In this subsection we follow \cite{Sh}. Given a Riemannian manifold $(M^m,g)$, we denote by $\exp_x:T_xM\to M$ the usual exponential geodesic map, which is a diffeomorphism of a ball $B_x(0,r)\subseteq T_xM$ of radius $r>0$ and centre 0 on a neighbourhood $\U_{x,r}$ of $x$ in $M$. Denoting by $r_x$ the supremum of the possible radii of such balls, we can define the \emph{injectivity radius} of $M$ as $r_{inj}=\inf_{x\in M}r_x$. If $r_{inj}>0$, then taking $r\in(0,r_{inj})$ we see that $\exp_x:B_x(0,r)\to\U_{x,r}$ is a diffeomorphism for all $x\in M$. Euclidean coordinates in $T_xM$ %(associated with an orthonormal frame in $T_xM$) 
define coordinates on $\U_{x,r}$ by means of $\exp_x$ which are called \emph{canonical}.

A complete Riemannian manifold $(M^m,g)$ is called a manifold of \emph{bounded geometry} if
\begin{enumerate}
    \item $r_{inj}>0$;
    \item $|\nabla^k R|\le C_k$ for all $k\in\N$; namely every covariant derivative of the Riemannian curvature tensor is bounded.
\end{enumerate}
Property (2) can be replaced with the following equivalent condition:
\begin{itemize}
\item[(2')] given $r\in(0,r_{inj})$ and $\U_{x,r}$, $\U_{x',r}$ two domains of canonical coordinates $y:\U_{x,r}\to\R^m$, $y':\U_{x',r}\to\R^m$ such that $\U_{x,r}\cap\U_{x',r}\ne\emptyset$, then the vector function $y'\circ y^{-1}:y(\U_{x,r}\cap\U_{x',r})\to\R^m$ satisfies
\[
|\del^\alpha_y(y'\circ y^{-1})|\le C_{\alpha,r}
\]
uniformly for every multi-index $\alpha$.
\end{itemize}
Examples of manifolds of bounded geometry are Lie groups with invariant metrics and coverings of compact manifolds with pull-back metrics.

The idea is that we can use canonical coordinates to formulate a uniform notion of $\cinf$-boundedness. For example, a smooth function $f:M\to\C$ is called $\cinf$-bounded if $|\del^\alpha_yf(y)|\le C_\alpha$ for every multi-index $\alpha$ and for any choice of canonical coordinates. We refer to \cite{Sh} for the more general definition of $\cinf$-bounded exterior form and tensor field, as well as of complex vector bundle of bounded geometry, of $\cinf$-bounded differential operators between bundles of bounded geometry and of uniformly elliptic $\cinf$-bounded differential operators. %The complexified tangent and cotangent bundles, the complexified bundles of forms, the covariant and exterior derivatives provide examples of such notions.

%We can use canonical coordinates to formulate a correct notion of $C^k$-boundedness. A function $f:M\to\C$ is called $C^k$-bounded if $f\in C^k(M,\C)$ and $|\del^\alpha_yf(y)|\le C_\alpha$ for every multi-index $\alpha$ with $|\alpha|\le k$ and for any choice of canonical coordinates. Similarly, any tensor field on $M$ is called $C^k$-bounded if all components of the tensor field in any canonical coordinate system are $C^k$-bounded with bounds depending only on the order of the differentiation but not on the chosen coordinate neighbourhood.

On a manifold of bounded geometry, using canonical coordinates it is possible to build a uniform partition of unity.

\begin{lemma}[{\cite[Lemma 1.1]{Sh}}]\label{lemma shubin covering}
Given a manifold $M$ of bounded geometry, for every $\epsilon>0$ there exists a countable covering of $M$ by balls of radius $\epsilon$, $M =\cup_iB(x_i,\epsilon)$, such that the maximal number of the balls $B(x_i,2\epsilon)$ with non-empty intersection is finite.
\end{lemma}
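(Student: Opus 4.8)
The plan is to build the covering from a maximal $\epsilon$-separated set and to control the overlap by a volume-packing argument, the essential geometric input being uniform two-sided volume bounds for metric balls, which hold on any manifold of bounded geometry.

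First I would fix $\epsilon>0$ and choose, by Zorn's lemma (or a greedy exhaustion), a \emph{maximal} subset $\{x_i\}_{i\in I}\subseteq M$ that is $\epsilon$-separated, i.e. $d(x_i,x_j)\ge\epsilon$ for all $i\ne j$. Maximality forces $M=\bigcup_i B(x_i,\epsilon)$: if some $y\in M$ lay outside every $B(x_i,\epsilon)$, then $d(y,x_i)\ge\epsilon$ for all $i$, so $\{x_i\}\cup\{y\}$ would still be $\epsilon$-separated, contradicting maximality. Since $M$ is a manifold it is second countable, hence separable, and an $\epsilon$-separated subset of a separable metric space is at most countable; thus $I$ is countable and we may index by $i\in\N$.

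The crux is the finite multiplicity of the balls $B(x_i,2\epsilon)$, and here bounded geometry enters through uniform volume bounds. The curvature bound in condition (2) yields in particular a uniform lower bound on the Ricci curvature, so Bishop--Gromov comparison gives a uniform upper bound $\vol\big(B(x,\rho)\big)\le V(\rho)$ for every $x\in M$ and every $\rho>0$. For small radii a uniform lower bound comes from condition (2'): in canonical coordinates on a ball of radius $\rho<r_{inj}$ the components of $g$ are controlled uniformly in the centre, so the Riemannian density is comparable to the Euclidean one and $\vol\big(B(x,\rho)\big)\ge v(\rho)>0$ with $v(\rho)$ independent of $x$; for $\rho\ge r_{inj}$ one bounds below by the volume of a sub-ball of radius $r_{inj}/2$, so a positive lower bound is available at every radius.

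With these bounds the overlap estimate is a standard packing argument. Since the $x_i$ are $\epsilon$-separated, the balls $B(x_i,\epsilon/2)$ are pairwise disjoint. Suppose a point $z\in M$ lies in $N$ of the balls $B(x_i,2\epsilon)$; then the corresponding centres satisfy $x_i\in B(z,2\epsilon)$, so the $N$ disjoint balls $B(x_i,\epsilon/2)$ are all contained in $B(z,5\epsilon/2)$. Summing volumes gives
\[
N\cdot v(\epsilon/2)\le\vol\big(B(z,5\epsilon/2)\big)\le V(5\epsilon/2),
\]
whence $N\le V(5\epsilon/2)/v(\epsilon/2)$, a bound independent of $z$ and of the chosen covering. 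This uniform bound on $N$ is exactly the asserted finiteness of the maximal number of balls $B(x_i,2\epsilon)$ with a common point. I expect the only genuine obstacle to be the justification of the uniform volume bounds, which is precisely where the bounded-geometry hypothesis is indispensable; the separation construction and the packing count are otherwise soft.
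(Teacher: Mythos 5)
Your argument is correct and is essentially the standard proof of this covering lemma: the paper itself does not prove it but quotes it from Shubin's \cite[Lemma 1.1]{Sh}, where the same strategy is used, namely a maximal $\epsilon$-separated set (so that maximality gives the covering and separation gives disjointness of the balls $B(x_i,\epsilon/2)$) combined with uniform two-sided volume bounds for metric balls, which follow from bounded geometry, to run the packing estimate $N\cdot v(\epsilon/2)\le V(5\epsilon/2)$. No gaps.
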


\begin{lemma}[{\cite[Lemma 1.2]{Sh}}]\label{lemma shubin partition of unity}
Given a manifold $M$ of bounded geometry, let us fix $r\in(0,r_{inj})$. For every $\epsilon\in(0,r/2)$ there exists a partition of unity $1=\sum_{i=1}^{+\infty}\phi_i$ on $M$ such that
\begin{enumerate}
    \item $\phi_i\in\cinf_c(M,\R)$, $\phi_i\ge0$ and $\supp\phi_i\subseteq B(x_i,2\epsilon)$ for the $x_i$ of Lemma \ref{lemma shubin covering};
    \item $|\del_y^\alpha\phi_i|\le C_\alpha$ in canonical coordinates $y$, uniformly for every multi-index $\alpha$.
\end{enumerate}
\end{lemma}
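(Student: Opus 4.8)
The plan is to follow the classical construction: build a uniformly bounded bump function around each centre $x_i$ produced by Lemma \ref{lemma shubin covering}, sum these functions, and normalise the result. First I would fix once and for all a \emph{model} cutoff $\psi\in\cinf_c(\R^m,\R)$ with $0\le\psi\le1$, $\psi\equiv1$ on the Euclidean ball $B(0,\epsilon)$ and $\supp\psi\subseteq B(0,2\epsilon)$. Since $\epsilon<r/2$ gives $2\epsilon<r<r_{inj}$, each exponential map $\exp_{x_i}:B_{x_i}(0,2\epsilon)\to\U_{x_i,2\epsilon}$ is a diffeomorphism, so I can set $\psi_i:=\psi\circ\exp_{x_i}^{-1}$ on $\U_{x_i,2\epsilon}$ and extend it by zero elsewhere. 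By construction $\psi_i\in\cinf_c(M,\R)$, $\psi_i\ge0$, $\psi_i\equiv1$ on $B(x_i,\epsilon)$ and $\supp\psi_i\subseteq\overline{B(x_i,2\epsilon)}$.

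The crucial point is to obtain \emph{uniform} derivative bounds. In the canonical coordinates centred at $x_i$ itself, $\del_y^\alpha\psi_i$ is literally $\del^\alpha\psi$, hence bounded by a constant depending only on $\alpha$. To see that the same holds in any other canonical chart $y'$, I would invoke condition (2') of bounded geometry: the transition maps $y'\circ y^{-1}$ have all derivatives bounded uniformly, independently of the charts chosen, so by the chain rule (Fa\`a di Bruno's formula) the bounds on $\del^\alpha\psi$ transfer to a uniform bound $|\del_{y'}^\alpha\psi_i|\le C_\alpha$, uniform in $i$ and in the point.

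Next I would set $\Psi:=\sum_i\psi_i$. By Lemma \ref{lemma shubin covering} each point of $M$ lies in at most a fixed finite number $N$ of the doubled balls $B(x_i,2\epsilon)$, so the sum is locally finite and $\Psi\in\cinf(M,\R)$; moreover, since the balls $B(x_i,\epsilon)$ already cover $M$ and $\psi_i\equiv1$ there, one has $\Psi\ge1$ everywhere. The same finite multiplicity shows that in any canonical chart at most $N$ summands are nonzero, so $\del_y^\alpha\Psi$ is a sum of at most $N$ terms each bounded by $C_\alpha$, whence $\Psi$ has uniformly bounded derivatives while satisfying $\Psi\ge1$. I would then define $\phi_i:=\psi_i/\Psi$. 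Immediately $\sum_i\phi_i=1$, $\phi_i\ge0$, $\phi_i\in\cinf_c(M,\R)$ and $\supp\phi_i\subseteq\supp\psi_i\subseteq B(x_i,2\epsilon)$, which is (1). For (2), since $\Psi\ge1>0$ has uniformly bounded derivatives, the reciprocal $1/\Psi$ also has all derivatives bounded uniformly, because these derivatives are universal polynomials in the $\del_y^\beta\Psi$ divided by powers of $\Psi\ge1$; the Leibniz rule applied to $\del_y^\alpha\phi_i=\del_y^\alpha(\psi_i\cdot\Psi^{-1})$ then yields a uniform bound $|\del_y^\alpha\phi_i|\le C_\alpha$.

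The main obstacle is really the uniformity, and it is concentrated in two places. The first is transferring the Euclidean bounds on the single model $\psi$ to arbitrary canonical coordinates, which is exactly the role played by condition (2') of bounded geometry. The second is controlling the reciprocal $1/\Psi$, for which the uniform lower bound $\Psi\ge1$ together with the finite-overlap property of Lemma \ref{lemma shubin covering} (ensuring a uniformly bounded number of nonvanishing terms) is essential; without these one could not prevent the normalisation from destroying the uniform estimates.
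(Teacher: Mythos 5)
The paper does not prove this lemma at all — it is quoted verbatim from Shubin \cite[Lemma 1.2]{Sh} — and your construction (transplanting a single Euclidean model cutoff via the exponential charts, using condition (2') to make the derivative bounds chart-independent, and normalising by the sum $\Psi\ge 1$ with the finite-overlap property controlling both the smoothness of $\Psi$ and the estimates on $1/\Psi$) is precisely the standard argument underlying Shubin's proof, and it is correct. The only cosmetic point is that to get $\supp\phi_i\subseteq B(x_i,2\epsilon)$ (open ball) you should note that $\supp\psi$ is a \emph{compact} subset of the open Euclidean ball $B(0,2\epsilon)$, so its image under $\exp_{x_i}$ is a compact subset of the open geodesic ball.
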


This is useful in the definition of uniform Sobolev-Hilbert spaces $W^{s,2}(M)$, for $s\in\N$. First introduce the Sobolev norm $\lv\cdot\rv_{s,2}$ on $\cinf_c(M)$ by the formula
\[
\lv u\rv_{s,2}^2:=\sum_{i=1}^{+\infty}\lv\phi_i u\rv^2_{s,2;B(x_i,2\epsilon)},
\]
where $\lv\cdot\rv_{s,2;B(x_i,2\epsilon)}$ means the usual Sobolev norm of order $s$ in canonical coordinates on $B(x_i,2\epsilon)$, \textit{i.e.},
\[
\lv v\rv^2_{s,2;B(x_i,2\epsilon)}=\sum_{|\alpha|\le s}\int_{B(x_i,2\epsilon)}|\del^\alpha_y v(y)|^2dy.
\]
Then define $W^{s,2}(M)$ as the completion of smooth compactly supported functions $\cinf_c(M)$ with respect to the norm $\lv \cdot\rv_{s,2}$. Similarly one defines $W^{s,2}(M,E)$ for any bundle $E$ of bounded geometry. We refer to \cite{Sh} for other equivalent Sobolev norms. Notice that $W^{0,2}(M,E)=L^2(M,E)$.

The following is the fundamental elliptic inequality on manifolds of bounded geometry, which is directly obtained by the same inequality holding on the above cover of balls $B(x_i,2\epsilon)$.

\begin{lemma}[{\cite[Lemma 1.3]{Sh}}]\label{lemma shubin elliptic inequality}
Given a manifold $M$ of bounded geometry, let $A:\cinf(M,E)\to\cinf(M,F)$ be a $\cinf$-bounded uniformly elliptic differential operator of order $m$ with $E,F$ vector bundles of bounded geometry. Then for $s,t\in\N$ there exists $C>0$ such that for all $u\in\cinf_c(M,E)$
\begin{equation}\label{equation elliptic inequality}
\lv u\rv_{s,2}\le C \left( \lv Au\rv_{s-m,2}+\lv u\rv_{t,2}\right)
\end{equation}
\end{lemma}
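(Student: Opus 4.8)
The strategy is to reduce the global estimate to its classical Euclidean counterpart on each canonical coordinate ball and then to patch the local estimates together by means of the uniform partition of unity of Lemma~\ref{lemma shubin partition of unity}; the whole point is that bounded geometry forces the constants in the local estimates to be independent of the chart. First I would record the local (Gårding) elliptic estimate: on a fixed ball $B(0,2\epsilon)\subseteq\R^m$, if $\tilde A$ is a uniformly elliptic operator of order $m$ whose coefficients are $\cinf$-bounded, then there is $C>0$, depending only on the ellipticity constant, on the $\cinf$-bounds of the coefficients, on $\epsilon$, and on $s,t,m$, such that $\lv v\rv_{s,2}\le C(\lv \tilde Av\rv_{s-m,2}+\lv v\rv_{t,2})$ for every $v\in\cinf_c(B(0,2\epsilon))$. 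The crucial observation is that, reading $A$ in the canonical coordinates attached to $x_i$, bounded geometry (condition (2$'$)), together with the hypothesis that $A$ is a $\cinf$-bounded uniformly elliptic operator between bundles of bounded geometry, guarantees that the coefficients of $A$ have $\cinf$-bounds and an ellipticity constant that are uniform in $i$. Hence the local constant $C$ may be chosen once and for all, independently of $i$.

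Next I would apply this uniform local estimate to $v=\phi_i u$ in the chart centred at $x_i$, using that $\supp\phi_i\subseteq B(x_i,2\epsilon)$. Writing $A(\phi_i u)=\phi_i\,Au+[A,\phi_i]u$, the commutator $[A,\phi_i]$ is a differential operator of order $m-1$ whose coefficients are controlled by the derivatives of $\phi_i$, themselves uniformly bounded by part (2) of Lemma~\ref{lemma shubin partition of unity}; since an operator of order $m-1$ maps $W^{s-1,2}$ into $W^{s-m,2}$, this gives $\lv A(\phi_i u)\rv_{s-m,2}\le C(\lv \phi_i\,Au\rv_{s-m,2}+\lv u\rv_{s-1,2;B(x_i,2\epsilon)})$ with $C$ uniform in $i$. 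Squaring, summing over $i$, and using the finite overlap of the balls $B(x_i,2\epsilon)$ from Lemma~\ref{lemma shubin covering} (together with the equivalence of the resulting Sobolev norms) to compare sums of local norms with the global norms, I recognise $\sum_i\lv\phi_i u\rv_{s,2;B(x_i,2\epsilon)}^2$ as $\lv u\rv_{s,2}^2$ and $\sum_i\lv\phi_i Au\rv_{s-m,2;B(x_i,2\epsilon)}^2$ as $\lv Au\rv_{s-m,2}^2$, obtaining an estimate of the form $\lv u\rv_{s,2}\le C(\lv Au\rv_{s-m,2}+\lv u\rv_{s-1,2})$.

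Finally I would remove the one-derivative loss. If $t\ge s$ the inequality \eqref{equation elliptic inequality} is trivial, so I may assume $t<s$; then, using the interpolation inequality $\lv u\rv_{s-1,2}\le\delta\,\lv u\rv_{s,2}+C_\delta\,\lv u\rv_{t,2}$ (valid uniformly on a manifold of bounded geometry), I choose $\delta$ small enough to absorb the term $\delta\,\lv u\rv_{s,2}$ into the left-hand side, which yields \eqref{equation elliptic inequality}. Alternatively one iterates the previous estimate $s-t$ times, each time lowering the Sobolev order by one and bounding every resulting $Au$-term by $\lv Au\rv_{s-m,2}$. I expect the main obstacle to be precisely the uniformity of the local constants: one must check carefully that the transition maps between overlapping canonical charts, the coefficients of $A$, and the derivatives of the $\phi_i$ are all controlled by constants independent of the base point, which is exactly the content of bounded geometry. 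Once this uniformity is secured, the patching via finite overlap and the final interpolation are routine.
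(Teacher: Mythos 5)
Your argument is correct and follows exactly the route the paper indicates: the paper does not prove this lemma but cites Shubin and remarks that it ``is directly obtained by the same inequality holding on the above cover of balls $B(x_i,2\epsilon)$'', which is precisely the local-estimate-plus-uniform-patching scheme you carry out. The points you flag as delicate (uniformity of the local constants in $i$, the commutator $[A,\phi_i]$, finite overlap, and the final interpolation to remove the one-derivative loss) are indeed the only nontrivial steps, and you handle them correctly.
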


\begin{proposition}[{\cite[Proposition 1.1]{Sh}}]\label{proposition shubin common domain}
In the same assumptions of Lemma \ref{lemma shubin elliptic inequality}, denoting by $A_{min}$ and $A_{max}$ the minimal and maximal closed extensions of $A$ as an unbounded operator $L^2(M,E)\to L^2(M,F)$, then their two domains coincide with $W^{m,2}(M,E)$; in particular $A_{min}=A_{max}$. In this case we will abuse notation and simply write $A$ instead of $A_{min}$ or $A_{max}$. Moreover, by \eqref{equation elliptic inequality}, the Sobolev norm $\lv \cdot\rv_{m,2}$ is equivalent to the graph norm of $A$.
\end{proposition}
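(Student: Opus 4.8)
The plan is to prove the statement in two stages: first to identify the domain of the minimal extension with $W^{m,2}(M,E)$, and then to upgrade this to the equality $A_{min}=A_{max}$.

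I would begin by showing that, on $\cinf_c(M,E)$, the Sobolev norm $\lv\cdot\rv_{m,2}$ and the graph norm $u\mapsto\lv u\rv_{0,2}+\lv Au\rv_{0,2}$ are equivalent. The inequality $\lv u\rv_{0,2}+\lv Au\rv_{0,2}\le C\lv u\rv_{m,2}$ is immediate: since $A$ is a $\cinf$-bounded differential operator of order $m$, applying it costs at most $m$ derivatives with uniformly bounded coefficients, so $\lv Au\rv_{0,2}\le C'\lv u\rv_{m,2}$, while $\lv u\rv_{0,2}\le\lv u\rv_{m,2}$ trivially. The reverse inequality is precisely the elliptic estimate \eqref{equation elliptic inequality} of Lemma \ref{lemma shubin elliptic inequality} with the choice $s=m$, $t=0$, giving $\lv u\rv_{m,2}\le C(\lv Au\rv_{0,2}+\lv u\rv_{0,2})$. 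Since $W^{m,2}(M,E)$ is by definition the completion of $\cinf_c(M,E)$ with respect to $\lv\cdot\rv_{m,2}$, while $A_{min}$ is by definition the closure of $A|_{\cinf_c}$ with respect to the graph norm, and since both norms dominate $\lv\cdot\rv_{0,2}$, the equivalence of the two norms on $\cinf_c(M,E)$ forces the two completions to coincide as subspaces of $L^2(M,E)$. This proves $\Dom(A_{min})=W^{m,2}(M,E)$ together with the asserted equivalence of the graph norm with $\lv\cdot\rv_{m,2}$ on this domain.

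It remains to show $A_{min}=A_{max}$. The inclusion $A_{min}\subseteq A_{max}$ is automatic, and since the two operators act by the same differential expression it suffices to prove $\Dom(A_{max})\subseteq W^{m,2}(M,E)=\Dom(A_{min})$. Let then $u\in L^2(M,E)$ with $Au\in L^2(M,F)$ distributionally. Interior elliptic regularity first gives $u\in W^{m,2}_{\mathrm{loc}}$. To globalise, I would fix the uniform partition of unity $1=\sum_i\phi_i$ of Lemma \ref{lemma shubin partition of unity}, apply the local elliptic estimate (with its uniform constant, coming from bounded geometry) to each compactly supported section $\phi_i u$, and sum. Writing $A(\phi_i u)=\phi_i\,Au+[A,\phi_i]u$, where the commutator $[A,\phi_i]$ is a differential operator of order at most $m-1$ with coefficients controlled by the uniformly bounded derivatives of $\phi_i$ and the $\cinf$-bounded coefficients of $A$, one obtains
\[
\lv u\rv_{m,2}^2\le C\sum_i\lv\phi_i u\rv_{m,2}^2\le C'\sum_i\big(\lv A(\phi_i u)\rv_{0,2}^2+\lv\phi_i u\rv_{0,2}^2\big)\le C''\big(\lv Au\rv_{0,2}^2+\lv u\rv_{m-1,2}^2+\lv u\rv_{0,2}^2\big),
\]
where the uniform finiteness of overlaps from Lemma \ref{lemma shubin covering} is used to pass between $\sum_i\lv\phi_i u\rv^2$ and the global norms. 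Absorbing the lower-order term via the interpolation inequality $\lv u\rv_{m-1,2}^2\le\delta\lv u\rv_{m,2}^2+C_\delta\lv u\rv_{0,2}^2$ with $\delta$ small yields the global a priori estimate $\lv u\rv_{m,2}\le C(\lv Au\rv_{0,2}+\lv u\rv_{0,2})$, hence $u\in W^{m,2}(M,E)$. This gives $\Dom(A_{max})\subseteq W^{m,2}(M,E)=\Dom(A_{min})\subseteq\Dom(A_{max})$, so all three coincide and $A_{min}=A_{max}$.

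The main obstacle is the globalisation in the last paragraph: the interior estimate is purely local, and one must patch the local estimates into a single global one with a \emph{uniform} constant. This is exactly where bounded geometry is essential, since it supplies the partition of unity with uniformly bounded derivatives (Lemma \ref{lemma shubin partition of unity}) and the uniform finiteness of overlaps (Lemma \ref{lemma shubin covering}), so that the constants do not degenerate as $i\to\infty$. The delicate point is the treatment of the order $m-1$ commutator term, and in particular justifying the absorption of $\lv u\rv_{m-1,2}$, which requires knowing a priori that the partial sums are finite; this is handled by a standard exhaustion or truncation argument before passing to the limit.
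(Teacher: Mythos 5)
The paper gives no proof of this proposition: it is quoted directly from Shubin's \cite[Proposition 1.1]{Sh}, so there is nothing in the text to compare against. Your argument is correct and is essentially the standard (Shubin-style) proof: the equivalence of $\lv\cdot\rv_{m,2}$ with the graph norm on $\cinf_c(M,E)$ via the uniform elliptic estimate identifies $\Dom(A_{min})$ with $W^{m,2}(M,E)$, and the globalised a priori estimate with the uniform partition of unity shows $\Dom(A_{max})\subseteq W^{m,2}(M,E)$. The one genuinely delicate step is the one you flag yourself — absorbing the order $m-1$ commutator term before the global finiteness of $\lv u\rv_{m,2}$ is known — and the truncation/exhaustion you propose (working with finite partial sums over the balls, for which $u\in W^{m,2}_{\mathrm{loc}}$ guarantees finiteness, and keeping track of the finitely overlapping neighbouring balls before letting the truncation go to infinity) is the standard and correct remedy.
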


\begin{corollary}[{\cite[Corollary 1.1]{Sh}}]\label{corollary shubin essentially self adjoint}
In the same assumptions of Lemma \ref{lemma shubin elliptic inequality}, if $E=F$ and $A$ is formally self-adjoint, then $A$ is essentially self-adjoint.
\end{corollary}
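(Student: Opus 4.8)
The plan is to derive this corollary directly from Proposition \ref{proposition shubin common domain}, which already establishes the essential ingredients. The statement asserts that a formally self adjoint, \Cinf-bounded, uniformly elliptic operator $A$ of order $m$ on a manifold of bounded geometry is essentially self adjoint. Recall that essential self adjointness of $A$ (initially defined on the dense domain $\cinf_c(M,E)$) means precisely that $A$ admits a unique self adjoint extension, which is equivalent to the statement that the minimal closed extension $A_{min}$ is already self adjoint.

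\medskip

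First I would invoke Proposition \ref{proposition shubin common domain} to conclude that, since $E=F$, the minimal and maximal closed extensions coincide: $A_{min}=A_{max}$, with common domain $W^{m,2}(M,E)$. The key general fact from the theory of unbounded operators is that for a symmetric densely defined operator $A$, one has $(A_{min})^* = A_{max}$ when $A$ is formally self adjoint; this is the standard identification of the adjoint of the closure of a symmetric operator with the maximal (distributional) extension. Thus $A_{min}$ is symmetric (being the closure of the symmetric operator $A$), and its adjoint is $(A_{min})^* = A_{max} = A_{min}$. An operator equal to its own adjoint is by definition self adjoint, so $A_{min}$ is self adjoint, which is exactly the assertion that $A$ is essentially self adjoint.

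\medskip

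The main point requiring care, rather than a genuine obstacle, is the justification of the identity $(A_{min})^* = A_{max}$ for a formally self adjoint operator. This rests on the definitions of minimal and maximal extensions: $A_{max}$ is the extension whose graph consists of all $(u,v)\in L^2\times L^2$ with $Au=v$ in the distributional sense, while $A_{min}$ is the closure of the operator restricted to $\cinf_c(M,E)$. Because $A$ is formally self adjoint, for $u\in\cinf_c(M,E)$ and $w$ in the domain of $A_{max}$ one has $\la A u, w\ra_{L^2}=\la u, A_{max} w\ra_{L^2}$ by integration by parts (the formal adjoint of $A$ being $A$ itself), which shows $w\in\Dom((A_{min})^*)$ with $(A_{min})^* w = A_{max} w$; the reverse inclusion follows from the definition of the maximal extension as the distributional one. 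This is precisely the general fact referred to in the preliminary discussion, where the excerpt directs the reader to \cite{HP} and the references therein for the theory of minimal and maximal closed extensions. With this identity in hand, the conclusion is immediate, so the corollary is essentially a formal consequence of Proposition \ref{proposition shubin common domain} together with the abstract characterization of self adjointness.
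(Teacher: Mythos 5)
Your derivation is correct and is exactly the standard route: the paper gives no proof of this corollary (it is quoted from Shubin), and the cited argument is precisely yours, namely that $(A_{min})^{*}=A_{max}$ for a formally self adjoint operator, so the equality $A_{min}=A_{max}$ from Proposition \ref{proposition shubin common domain} forces $A_{min}$ to be self adjoint. Nothing further is needed.
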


In Section \ref{section l2 frolicher}, we will work on normal coverings of compact complex manifolds. These manifolds have bounded geometry since the covering map is a local isometry. Moreover, the bundles of exterior forms are bundles of bounded geometry and all the differential operators that we will use will be $\cinf$-bounded, since these objects are the pull-back, or lift, of the same objects on the underlying compact manifolds. The condition of uniformly ellipticity is proved similarly.

\section{Injectivity of projectors}\label{section projectors}

For the notion of spectral family, or spectral resolution, of a self-adjoint operator we refer to \cite[Appendix A.5]{Gri}. To provide some intuition, note that when the spectrum $\spec P\subseteq\R$ of a self-adjoint operator $P:H\to H$ is discrete, its spectral family $E_\lambda:H\to H$ is the orthogonal projection onto the direct sum of $P$-eigenspaces $H_\sigma$ for $\sigma\le\lambda$. We recall the notion of convergence of self-adjoint operators in the norm resolvent sense following \cite[Section VIII.7]{RS1}.

Let $D$ be a closed operator on a Hilbert space $H$. A complex number $\lambda\in\C$ is in the \emph{resolvent set} of $D$ if $D-\lambda I$ is a bijection of the domain of $D$ onto $H$ with a bounded inverse. If $\lambda$ is in the resolvent set, the inverse map $R_\lambda(D):=(D-\lambda I)^{-1}$ is called the \emph{resolvent} of $D$ at $\lambda$. If $\lambda\in\C$ is not in the resolvent set of $D$, then $\lambda$ is said to be in the \emph{spectrum} $\spec D$ of $D$.

Let $D_h$ be self-adjoint operators on a Hilbert space $H$ for $h\in[0,1]$. Then $D_h$ is said to converge to $D_0$ in the \emph{norm resolvent sense} for $h\to0$ if $R_\lambda(D_h)\to R_\lambda(D_0)$ in norm for all $\lambda\in\C$ with $\img \lambda\ne0$.

We recall the following criterium for convergence in the norm resolvent sense which will be used in the next section.
\begin{lemma}[{\cite[Theorem VIII.25(b)]{RS1}}]\label{lemma reed simon}
Let $\{D_h\}_{h\in[0,1]}$ be self-adjoint operators on a Hilbert space $H$ with common domain $D$. Endow $D$ with the graph norm of $D_0$. If
\[
\sup_{\lv\phi\rv_H+\lv D_0\phi\rv_H=1}\lv (D_h-D_0)\phi\rv_H\to0
\]
for $h\to0$, then $D_h\to D_0$ in the norm resolvent sense.
\end{lemma}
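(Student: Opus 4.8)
The plan is to reduce the statement to the second resolvent identity combined with the elementary a priori bound on the resolvent of a self adjoint operator. Fix $\lambda\in\C$ with $\img\lambda\neq0$. Since each $D_h$ is self adjoint, such a $\lambda$ lies in its resolvent set, and the spectral theorem gives the uniform bound $\lv R_\lambda(D_h)\rv\le|\img\lambda|^{-1}$ for all $h\in[0,1]$. The heart of the argument is the identity
\[
R_\lambda(D_h)-R_\lambda(D_0)=R_\lambda(D_h)\,(D_0-D_h)\,R_\lambda(D_0),
\]
which I would verify directly, to keep control of domains: for $\psi\in H$ put $\eta:=R_\lambda(D_0)\psi\in D$; since $D$ is the common domain, $D_0\eta$ and $D_h\eta$ are both defined, and applying $R_\lambda(D_h)$ to $(D_0-D_h)\eta=\psi-(D_h-\lambda)\eta$ returns $R_\lambda(D_h)\psi-\eta$, which is exactly the left-hand side evaluated at $\psi$.

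Next I would take operator norms. Using $\lv R_\lambda(D_h)\rv\le|\img\lambda|^{-1}$, the problem reduces to estimating $\lv(D_0-D_h)R_\lambda(D_0)\rv$, and this is where the hypothesis enters. By homogeneity the assumption is equivalent to the bound
\[
\lv(D_h-D_0)\phi\rv_H\le\epsilon_h\bigl(\lv\phi\rv_H+\lv D_0\phi\rv_H\bigr)\quad\text{for all }\phi\in D,\qquad \epsilon_h:=\sup_{\lv\phi\rv_H+\lv D_0\phi\rv_H=1}\lv(D_h-D_0)\phi\rv_H,
\]
with $\epsilon_h\to0$ as $h\to0$. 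Applying this to $\phi=\eta=R_\lambda(D_0)\psi$ and controlling the graph norm of $\eta$ via $\lv\eta\rv_H\le|\img\lambda|^{-1}\lv\psi\rv_H$ together with $D_0\eta=\psi+\lambda\eta$, which gives $\lv D_0\eta\rv_H\le\bigl(1+|\lambda|\,|\img\lambda|^{-1}\bigr)\lv\psi\rv_H$, one obtains $\lv\eta\rv_H+\lv D_0\eta\rv_H\le C_\lambda\lv\psi\rv_H$ for a constant $C_\lambda$ depending only on $\lambda$. Hence $\lv(D_0-D_h)R_\lambda(D_0)\rv\le\epsilon_h C_\lambda$, and assembling the estimates yields
\[
\lv R_\lambda(D_h)-R_\lambda(D_0)\rv\le|\img\lambda|^{-1}\,\epsilon_h\,C_\lambda\longrightarrow0
\]
as $h\to0$, which is the required convergence in the norm resolvent sense.

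I do not expect a genuine obstacle: the whole computation is standard once the resolvent identity is in place. The only point demanding care is that this identity is manipulated for \emph{unbounded} operators, so every step must remain inside the common domain $D$ — which is precisely why the hypothesis is formulated with $D$ as a shared domain endowed with the graph norm of $D_0$. All the remaining quantitative content is the uniform resolvent bound $\lv(D-\lambda I)^{-1}\rv\le|\img\lambda|^{-1}$ valid for any self adjoint $D$.
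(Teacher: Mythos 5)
Your proof is correct and is exactly the standard argument behind the cited result \cite[Theorem VIII.25(b)]{RS1}: the paper gives no proof of this lemma, relying on that reference, whose proof is precisely your combination of the second resolvent identity $R_\lambda(D_h)-R_\lambda(D_0)=R_\lambda(D_h)(D_0-D_h)R_\lambda(D_0)$ with the uniform bound $\lv R_\lambda(D_h)\rv\le|\img\lambda|^{-1}$ and the graph-norm estimate on $R_\lambda(D_0)\psi$. Your care with domains (checking the identity pointwise on $\eta=R_\lambda(D_0)\psi\in D$) is the right way to handle the unboundedness, and nothing is missing.
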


We now present the main functional analytic result of this paper which will play a crucial role in the proof of Theorem \ref{theorem l2 frolicher}.

\begin{theorem}\label{theorem injective projection}
Let us consider a family of non-negative self-adjoint operators $\{D_h\}_{h\in[0,1]}$ on a Hilbert space $H$, each with spectral family $\{E_{h,\lambda}\}_{\lambda\in\R}$. If $D_h\to D_0$ in the norm resolvent sense for $h\to0$, then for every $\tau>0$ there exists an $h_\tau>0$ such that for all $0\le\sigma<\tau$ and $0<h<h_\tau$ the projection
\[
E_{0,\tau|\im (E_{h,\sigma})}:\im (E_{h,\sigma})\to\im (E_{0,\tau})
\]
is injective.
\end{theorem}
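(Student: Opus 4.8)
The plan is to exploit the norm-resolvent convergence to control the spectral projectors, reducing injectivity to a statement about how close $E_{h,\sigma}$ and $E_{0,\tau}$ are. The guiding principle is that, for non-negative self adjoint operators, convergence in the norm resolvent sense implies convergence of the spectral projectors $E_{h,\lambda}\to E_{0,\lambda}$ in norm whenever $\lambda$ is not an eigenvalue (more precisely, whenever $\lambda$ lies outside the spectrum in the appropriate sense). This is the content of \cite[Theorem VIII.23(b)]{RS1}: if $f$ is a bounded continuous function on $\R$, then $f(D_h)\to f(D_0)$ in norm; and for characteristic functions $\chi_{(-\infty,\lambda]}$ one gets strong convergence of $E_{h,\lambda}\to E_{0,\lambda}$, upgraded to norm convergence at points $\lambda$ where the limit spectral projector has no jump.

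First I would fix $\tau>0$ and observe that, since $D_0$ is self adjoint, the set of $\lambda$ at which $E_{0,\lambda}$ is \emph{not} norm-continuous is at most countable; hence I may and should choose $\tau$ to be a point of continuity (or, if $\tau$ is prescribed, replace it by a slightly larger continuity point $\tau'>\tau$, which only enlarges the target $\im(E_{0,\tau})$ and therefore preserves injectivity). At such a $\tau$, norm-resolvent convergence gives $\lv E_{h,\tau}-E_{0,\tau}\rv\to0$ as $h\to0$. I would then pick $h_\tau>0$ so small that $\lv E_{h,\tau}-E_{0,\tau}\rv<1$ for all $0<h<h_\tau$.

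The key step is the following: for $0\le\sigma<\tau$, the projector $E_{h,\sigma}$ satisfies $E_{h,\tau}E_{h,\sigma}=E_{h,\sigma}$, since $E_{h,\lambda}$ is monotone in $\lambda$. Now take $\phi\in\im(E_{h,\sigma})$ with $E_{0,\tau}\phi=0$; I want $\phi=0$. Writing $\phi=E_{h,\sigma}\phi=E_{h,\tau}\phi$, I estimate
\[
\lv\phi\rv=\lv E_{h,\tau}\phi\rv=\lv (E_{h,\tau}-E_{0,\tau})\phi\rv\le\lv E_{h,\tau}-E_{0,\tau}\rv\,\lv\phi\rv,
\]
where the second equality uses $E_{0,\tau}\phi=0$. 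Since $\lv E_{h,\tau}-E_{0,\tau}\rv<1$ for $h<h_\tau$, this forces $\lv\phi\rv=0$, i.e. $\phi=0$, establishing injectivity of $E_{0,\tau}|_{\im(E_{h,\sigma})}$ onto $\im(E_{0,\tau})$.

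The main obstacle I anticipate is the passage from norm-resolvent convergence to norm convergence of the spectral projectors $E_{h,\tau}$, which is genuinely delicate because $E_{h,\lambda}$ is a discontinuous function of $D_h$. The clean way is to sandwich: choose continuous cutoffs $g_1\le\chi_{(-\infty,\tau]}\le g_2$ with $g_1,g_2$ agreeing with the characteristic function away from a small neighbourhood of $\tau$, apply the functional calculus convergence $g_i(D_h)\to g_i(D_0)$ in norm, and use that $\tau$ is a continuity point to squeeze the gap to zero. Handling this carefully—and in particular justifying the reduction to a continuity point $\tau$, which is where the countability of jump points is used—is the technical heart of the argument; the injectivity estimate itself is then immediate.
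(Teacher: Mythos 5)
There is a genuine gap at the key step of your argument: the claim that the spectral family $E_{0,\lambda}$ has at most countably many points of \emph{norm}-discontinuity is false. For $\lambda<\lambda'$ the difference $E_{0,\lambda'}-E_{0,\lambda}$ is an orthogonal projection, so its operator norm is either $0$ or $1$; hence $\lambda\mapsto E_{0,\lambda}$ is norm-continuous at $\tau$ if and only if an interval around $\tau$ carries no spectral measure, i.e.\ if and only if $\tau$ sits in a gap of $\spec(D_0)$. The set of norm-discontinuity points is therefore essentially the whole spectrum, which for the operators $D_0^2$ on a non-compact covering is typically non-discrete and may contain entire intervals --- exactly the situation this theorem is built for. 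So you cannot in general move $\tau$ to a nearby continuity point, and the sandwich $g_1\le\chi_{(-\infty,\tau]}\le g_2$ does not close, since $\lv (g_2-g_1)(D_0)\rv=\sup_{\lambda\in\spec(D_0)}|g_2(\lambda)-g_1(\lambda)|=1$ whenever the spectrum meets the region where $g_1\ne g_2$; what \cite[Theorem VIII.23(b)]{RS1} gives is norm convergence of $P_{(a,b)}(D_h)$ only for $a,b\notin\spec(D_0)$. A concrete counterexample to your premise: let $D_0$ be multiplication by $\lambda$ on $L^2([1,+\infty))$ and $D_h:=D_0-hI$ (non-negative for $h<1$, norm resolvent convergent to $D_0$ since $\lv D_h-D_0\rv=h$); then $E_{h,\tau}-E_{0,\tau}$ is the projection onto $L^2((\tau,\tau+h])$, of norm $1$ for every $h>0$ and every $\tau\ge1$. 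Note also that this example shows the conclusion genuinely fails for $\sigma=\tau$, whereas your argument uses $\sigma<\tau$ only through $E_{h,\tau}E_{h,\sigma}=E_{h,\sigma}$, which holds for $\sigma=\tau$ as well --- a sign that the strict separation must enter more seriously. (A secondary slip: even if a continuity point $\tau'>\tau$ existed, injectivity of $E_{0,\tau'}|_{\im(E_{h,\sigma})}$ is \emph{weaker} than injectivity of $E_{0,\tau}|_{\im(E_{h,\sigma})}$, since $E_{0,\tau'}\phi=0$ implies $E_{0,\tau}\phi=0$ and not conversely; your reduction points the wrong way.)

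Your final contraction estimate $\lv\phi\rv=\lv(E_{h,\tau}-E_{0,\tau})\phi\rv\le\lv E_{h,\tau}-E_{0,\tau}\rv\,\lv\phi\rv$ is correct but rests on a convergence you cannot obtain. The paper's proof sidesteps spectral projectors entirely: arguing by contradiction with unit vectors $u_n\in\im(E_{h_n,\sigma})\cap(\im(E_{0,\tau}))^{\bot}$, it applies the functional calculus to the continuous function $\lambda\mapsto(1+\lambda)^{-1}$, for which norm resolvent convergence does give $\lv R_{h_n}u_n-R_0u_n\rv\to0$, and reads off from the supports of the spectral measures the quantitative separation $\lv R_0u_n\rv\le\frac{1}{1+\tau}<\frac{1}{1+\sigma}\le\lv R_{h_n}u_n\rv$. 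This is where $\sigma<\tau$ does real work. If you want to rescue your scheme, replace the projectors by the resolvents in this way.
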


\begin{proof}
Since $D_h$ is non-negative, then $\spec D_h\subseteq[0,+\infty)$.
Throughout the proof we denote with $R_{h}:=(D_h+I)^{-1}$ the resolvent $R_{-1}(D_h)$ of $D_h$ at $-1$. By contradiction, let us assume that the above statement does not hold true. Then there exist real numbers $0<\sigma<\tau$ and a sequence $\{h_n\}_{n\in \mathbb{N}}$, $h_n\rightarrow 0$ as $n\rightarrow +\infty$, such that 
\[
E_{0,\tau|\im (E_{h_n,\sigma})}:\im (E_{h_n,\sigma})\to\im (E_{0,\tau})
\]
is not injective for all $n\in\N$.
This means that for each $n\in\N$ we can find an element $u_{n}$ such that $\|u_{n}\|_{H}=1$ and 
\[
u_{n}\in \im (E_{h_n,\sigma})\cap (\im (E_{0,\tau}))^{\bot}.
\]
Note now that as consequence of the spectral theorem for unbounded self-adjoint operators \cite[Appendix A.5.4]{Gri} we get the inequalities
\[
\frac{1}{1+\tau}\geq \lv R_{0}u_{n}\rv_{H}
\]
and 
\[
\lv R_{h_n}u_{n}\rv_H\ge \frac{1}{1+\sigma}.
\]
For the sake of clarity we prove the first inequality above. The second one follows by arguing in the same manner. 
We refer to \cite[Appendix A.5]{Gri} for the notation. In particular, the measure $\lv E_{0,\lambda}u_{n}\rv_{H}^2$ on $\R$ is defined on each interval $[a,b)$ as $\lv E_{0,b} u_{n}\rv_{H}^2-\lv E_{0,a} u_{n}\rv_{H}^2$.
We have
$$
\begin{aligned}
\lv R_{0}u_{n}\rv_{H}^2%&=\int\left(\frac{1}{1+\lambda}\right)^2d\lv E_{0,\lambda}u_{n}\rv_{H}^2\\
&=%\int_0^{\tau}\left(\frac{1}{1+\lambda}\right)^2d\lv E_{0,\lambda}u_{n}\rv_{H}^2+
\int_{0}^{+\infty}\left(\frac{1}{1+\lambda}\right)^2d\lv E_{0,\lambda} u_{n}\rv_{H}^2\\
&=\int_{\tau}^{+\infty}\left(\frac{1}{1+\lambda}\right)^2d\lv E_{0,\lambda} u_{n}\rv_{H}^2\\
&\leq \left(\frac{1}{1+\tau}\right)^2\int_{\tau}^{+\infty}d\lv E_{0,\lambda}u_{n}\rv_{H}^2\\
&= \left(\frac{1}{1+\tau}\right)^2 \lim_{b\rightarrow +\infty}\int_{\tau}^bd\lv E_{0,\lambda}u_{n}\rv_{H}^2\\
&=\left(\frac{1}{1+\tau}\right)^2 \lim_{b\rightarrow +\infty}\left(\lv E_{0,b}u_{n}\rv_{H}^2-\lv E_{0,\tau} u_{n}\rv_{H}^2\right)\\
&=\left(\frac{1}{1+\tau}\right)^2\lv u_{n}\rv_{H}^2.
\end{aligned}
$$
All the steps are justified, in order, by: \cite[eq. A.50]{Gri}; since for $\lambda<\lambda'$ we have $\im (E_{0,\lambda})\subseteq \im (E_{0,\lambda'})$ \cite[Definition A.2]{Gri} and since $u_n\in(\im (E_{0,\tau}))^{\bot}$, then $E_{0,\lambda} u_{n}=0$ for $\lambda\in[0,\tau]$; elementary inequality; definition of $\int_\tau^{+\infty}$; definition of the measure; $E_{0,\lambda}$ tends to the identity in the strong operator topology \cite[Definition A.2]{Gri} and $E_{0,\tau} u_{n}=0$ as above.

We thus obtain, 
using the reverse triangle inequality,
$$
\begin{aligned}
\frac{1}{1+\tau}&\geq \|R_{0}u_{_n}-R_{h_n}u_{n}+R_{h_n}u_{n}\|_{H}\\
& \geq \left|\|R_{0}u_{n}-R_{h_n}u_{n}\|_{H}-\|R_{h_n}u_{n}\|_{H}\right|\\
& \ge\|R_{h_n}u_{n}\|_{H}-\|R_{0}u_{n}-R_{h_n}u_{n}\|_{H}\\
& \ge\frac{1}{1+\sigma}-\|R_{0}u_{n}-R_{h_n}u_{n}\|_{H}.\\
\end{aligned}
$$
Since $D_{h_n}\to D_0$ in the norm resolvent sense for $n\to+\infty$ and $-1$ is not in the spectrum of $D_0$, by \cite[Theorem VIII.23]{RS1} we deduce
\[
\|R_{0}u_{n}-R_{h_n}u_{n}\|_{H}\to0
\]
as $n\to+\infty$. %Therefore, for $n\in\N$ big enough, we can assume $\|R_{0}u_{n}-R_{h_n}u_{n}\|_{H}$ to be arbitrarily small. 
Summarising, we have 
$$
\frac{1}{1+\tau}-\frac{1}{1+\sigma}\ge\lim_{n\rightarrow +\infty} \|R_{0}u_{n}-R_{h_n}u_{n}\|_{H}=0
$$
which is a contradiction since $0\le\sigma<\tau$. We can thus conclude that the statement of this theorem holds true.
\end{proof}

\begin{remark}\label{remark more general assumptions}
The same conclusion of Theorem \ref{theorem injective projection} holds in the following more general cases:
\begin{itemize}
    \item if we ask only for the non-negativity of $D_0$ (or of $D_0+kI$, for any positive constant $k>0$), instead of the non-negativity of the whole family $\{D_h\}$; this is true by applying \cite[Theorem VIII.23]{RS1};
    \item if we directly ask for the convergence of the resolvents $(D_h+kI)^{-1}\to (D_0+kI)^{-1}$ in operator norm as $h\to0$ for some positive constant $k>0$, instead of applying \cite[Theorem VIII.23]{RS1} to deduce it; this assumption replaces the non-negativity of the operators and the convergence in the norm resolvent sense.
\end{itemize}
\end{remark}

Given a non-negative self-adjoint operator $D$ on a Hilbert space $H$, we recall that the following two facts are equivalent:
\begin{enumerate}
    \item the image of $D$ is closed;
    \item $D$ has a \emph{spectral gap} at 0, \textit{i.e.}, $\inf(\spec(D)\setminus\{0\})=C>0$.
\end{enumerate}
%We refer, \textit{e.g.}, to \cite[Lemma 4.8]{HP} for the proof.

\begin{corollary}\label{corollary closed image}
    In the same assumptions of Theorem \ref{theorem injective projection}, if $\im D_0$ is closed, then there exists an $h'>0$ such that for all $0<h<h'$ the projection
    \[
    E_{0,0|\ker (D_{h})}:\ker(D_{h})\to\ker(D_{0})
    \]
    is injective.
\end{corollary}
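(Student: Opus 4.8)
The plan is to deduce the corollary from Theorem \ref{theorem injective projection} by choosing the threshold $\tau$ inside the spectral gap of $D_0$. First I would record the elementary identification of the relevant kernels with images of spectral projectors at the level $\lambda=0$. Since each $D_h$ is non-negative, $\spec D_h\subseteq[0,+\infty)$, and with the convention recalled at the beginning of Section \ref{section projectors} the projector $E_{h,0}$ is exactly the orthogonal projection onto the eigenvalue-$0$ subspace; hence $\im(E_{h,0})=\ker(D_h)$ for every $h\in[0,1]$, and in particular $\im(E_{0,0})=\ker(D_0)$. Thus the map in the statement is precisely $E_{0,0|\im(E_{h,0})}\colon\im(E_{h,0})\to\im(E_{0,0})$.

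The only difficulty is that Theorem \ref{theorem injective projection} is stated for $0\le\sigma<\tau$ with $\tau>0$, so it does not directly cover the boundary value $\tau=0$ needed on the target. This is exactly where the hypothesis that $\im D_0$ is closed enters. By the equivalence recalled immediately before the corollary, closedness of $\im D_0$ is the same as the existence of a spectral gap, that is, $C:=\inf(\spec(D_0)\setminus\{0\})>0$. I would then fix any $\tau$ with $0<\tau<C$. Because $\spec(D_0)\cap(0,\tau]=\emptyset$, the spectral family of $D_0$ is constant on $[0,\tau]$, so $E_{0,\tau}=E_{0,0}$ and therefore $\im(E_{0,\tau})=\ker(D_0)$.

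With this choice of $\tau$ I would invoke Theorem \ref{theorem injective projection}, which yields $h_\tau>0$ such that for all $0\le\sigma<\tau$ and all $0<h<h_\tau$ the projection $E_{0,\tau|\im(E_{h,\sigma})}\colon\im(E_{h,\sigma})\to\im(E_{0,\tau})$ is injective. Specialising to $\sigma=0$ and using the two identifications above, the map $E_{0,\tau|\im(E_{h,0})}$ coincides with $E_{0,0|\ker(D_h)}\colon\ker(D_h)\to\ker(D_0)$, which is therefore injective for every $0<h<h_\tau$. Setting $h':=h_\tau$ completes the argument.

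The main obstacle is not computational but conceptual: Theorem \ref{theorem injective projection} genuinely fails to apply at $\tau=0$, and the role of the closed-image hypothesis is precisely to supply a positive gap $C$ that lets me replace the inaccessible value $\tau=0$ by a small positive $\tau<C$ without altering the target projector. Were $\im D_0$ not closed, $0$ could be a non-isolated point of $\spec(D_0)$, the inclusion $\im(E_{0,0})\subsetneq\im(E_{0,\tau})$ could be strict for every $\tau>0$, and this replacement would break down.
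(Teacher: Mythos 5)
Your proposal is correct and follows essentially the same route as the paper: the closed-image hypothesis gives a spectral gap $C>0$, so that $\im(E_{0,\tau})=\ker(D_0)$ for any $0<\tau<C$, and Theorem \ref{theorem injective projection} applied with this $\tau$ and $\sigma=0$ gives the claim. Your extra remarks on the identification $\im(E_{h,0})=\ker(D_h)$ and on why $\tau=0$ is inaccessible are consistent with the conventions used in the paper.
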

\begin{proof}
    Given $\inf(\spec(D_0)\setminus\{0\})=C>0$, then for all $0<\tau<C$
    \[
    \im(E_{0,\tau})=\ker D_0.
    \]
    The claim now follows by taking $\sigma=0$ in Theorem \ref{theorem injective projection}.
\end{proof}

\section{Normal coverings of compact complex manifolds}\label{section l2 frolicher}

Let $(M,g)$ be a compact Hermitian manifold along with a normal covering $\pi:\widetilde{M}\rightarrow M$ as defined in the introduction. Denote by $\Gamma$ the corresponding group of deck transformations.  In what follows we may call the covering a \emph{normal $\Gamma$-covering}. Denote by $\tilde{g}=\pi^*g$ the pull-back metric on the covering. Note that a pull-back metric is \emph{$\Gamma$-invariant}, \textit{i.e.}, $\gamma^*g=g$ for all $\gamma\in\Gamma$, and any $\Gamma$-invariant metric is a pull-back metric.

Given any Hermitian vector bundle $(E,h)$ on $M$, indicate by $(\widetilde E,\tilde h)$ the Hermitian pull-back bundle with respect to $\pi$. Note that the group $\Gamma$ acts in a natural way on $\widetilde{E}$. Indeed, given any $\gamma\in \Gamma$, $x\in \widetilde{M}$ and  $(x,v)\in \widetilde{E}_x$ with $v\in E_{\pi(x)}$, the action of $\gamma$ on $(x,v)$ is given by  $\gamma_E(x,v):=(\gamma(x),v)$. This gives clearly a linear isomorphism $\gamma_E:\widetilde{E}_x\rightarrow \widetilde{E}_{\gamma(x)}$ which is also an isometry with respect to $\tilde{h}$. Thus the group $\Gamma$ acts also on $C^{\infty}(\tilde{M},\widetilde{E})$  by $\gamma\cdot s:=\gamma_E\circ s\circ \gamma^{-1}$.
Let us consider now the space $L^2(\widetilde M,\widetilde E)$ of square-integrable sections of $\widetilde E$ with respect to the pull-back metric $\tilde h$. Note that $L^2(\widetilde M,\widetilde E)$, as a topological vector space, does not depend on the choice of the pull-back metric $\tilde h$.
The group $\Gamma$ then acts on $L^2 (\widetilde M,\widetilde E)$ by isometries given by the action $\gamma\cdot\alpha$ for any $\gamma \in \Gamma$, $\alpha \in L^2 (\widetilde M,\widetilde E)$.  

A closed Hilbert subspace $V \subseteq L^2 (\widetilde M,\widetilde E)$ is called a \emph{Hilbert $\Gamma$-module} if it is preserved by the action of $\Gamma$, \textit{i.e.}, $\gamma\cdot V=V$ for any $\gamma \in \Gamma$. %Any subspace $V \subseteq L^2 \widetilde{E}$ will be called \textit{$\Gamma$-invariant} if it satisfies this property. 
Maps between Hilbert $\Gamma$-modules are given by bounded \emph{$\Gamma$-equivariant} operators, namely bounded operators $P$ which commute with the action with respect to any $\gamma \in\Gamma$, \textit{i.e.}, $\gamma\cdot(P\alpha)=P(\gamma\cdot\alpha)$ for any $\alpha \in L^2 (\widetilde M,\widetilde E)$. %A map between Hilbert $\Gamma$-modules is called a \emph{weak isomorphism} if it is injective and has dense image. %A \emph{strong isomorphism} is moreover an isometric isomorphism. It is well known, see, \textit{e.g.}, \cite[Lemma 2.5.3]{E}, that if there is a weak isomorphism between Hilbert $\Gamma$-modules then a strong isomorphism can be built by the polar decomposition. 
A sequence of maps between Hilbert $\Gamma$-modules
\[
\dots\longrightarrow V_{i-1}\overset{d_{i-1}}{\longrightarrow} V_i\overset{d_{i}}{\longrightarrow} V_{i+1}\longrightarrow\dots
\]
is called \emph{weakly exact} if $\c{\im d_{i-1}}=\ker d_i$ for all $i$.

We will now define the $\Gamma$-dimension, or Von Neumann dimension, of a Hilbert $\Gamma$-module $V \subseteq L^2 (\widetilde M,\widetilde E)$. We refer to \cite{Ati}, \cite[Section 1.1]{Lu}, \cite[Section 9]{HP} for a more detailed treatment.

First take an orthonormal basis $\{\phi_i \}$ of $V$ and define the function on $\widetilde{M}$ with values in $\R_{\ge0}\cup\{+\infty\}$
$$
\tilde{f}(\tilde{x}) = \sum_i \tilde{h}_{\tilde x}(\phi_i (\tilde x),\phi_i (\tilde x)). 
$$
This definition is independent of the choice of orthonormal basis $\{\phi_i \}$, and moreover $\tilde f$ is $\Gamma$-invariant, so that it descends to a function $f$ on the quotient $M\simeq \Gamma\backslash\widetilde M$.
The $\Gamma$-dimension is then given by
$$
\dim_{\Gamma} V = \int_M f(x) d\mu
$$
and takes values in $\R_{\ge0}\cup\{+\infty\}$. If $\pi:\widetilde M\to M$ is a finite normal $\Gamma$-covering, then the $\Gamma$-dimension coincides with the usual dimension of a vector space divided by the cardinality of $\Gamma$ \cite[p. 45]{Ati}. In particular, if $\pi:M\to M$ is the trivial covering, then the Von Neumann dimension coincides with the usual dimension of a vector space.

We state a couple of properties of the $\Gamma$-dimension which will be used in the following. We refer to \cite[Theorem 1.12]{Lu} for the proof.

\begin{proposition}[Properties of the Von Neumann dimension]\label{proposition properties von neumann dimension}
For a Hilbert $\Gamma$-module $V$, it holds
\[
V=\{0\} \iff \dim_\Gamma(V)=0.
\]
Moreover, if $0\to U\to V\to W\to 0$ is a weak exact sequence of Hilbert $\Gamma$-modules, then
\[
\dim_\Gamma(V)=\dim_\Gamma(U)+\dim_\Gamma(W).
\]
\end{proposition}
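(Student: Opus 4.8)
The plan is to read off both assertions from the defining formula $\dim_\Gamma V=\int_M f\,d\mu$, where $f$ is the descent of the pointwise density $\tilde f(\tilde x)=\sum_i\tilde h_{\tilde x}(\phi_i(\tilde x),\phi_i(\tilde x))$ attached to an orthonormal basis $\{\phi_i\}$ of $V$. The first statement is essentially a consequence of nonnegativity. Each summand $\tilde h_{\tilde x}(\phi_i(\tilde x),\phi_i(\tilde x))=|\phi_i(\tilde x)|^2_{\tilde h}$ is nonnegative, so $\tilde f\ge 0$ and hence $f\ge 0$. If $V=\{0\}$ the basis is empty, $\tilde f\equiv 0$, and $\dim_\Gamma V=0$. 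Conversely, if $\dim_\Gamma V=\int_M f\,d\mu=0$ with $f\ge 0$, then $f=0$ almost everywhere on $M$, so by $\Gamma$-invariance $\tilde f=0$ almost everywhere on $\widetilde M$. Since $\widetilde M$ is second countable, $L^2(\widetilde M,\widetilde E)$ is separable and $\{\phi_i\}$ is countable; the vanishing of the pointwise sum then forces $\phi_i(\tilde x)=0$ for a.e.\ $\tilde x$ and every $i$, i.e.\ each $\phi_i=0$ in $L^2$. As every basis vector has $\lv\phi_i\rv_{L^2}=1$, the index set must be empty, whence $V=\{0\}$.

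For additivity, write the sequence as $0\to U\overset{i}{\to}V\overset{p}{\to}W\to 0$. Weak exactness gives that $i$ is injective with $\overline{\im i}=\ker p$, and that $\overline{\im p}=W$. The first move is the orthogonal decomposition $V=\ker p\oplus(\ker p)^{\perp}$ into Hilbert $\Gamma$-submodules (both are $\Gamma$-invariant since $p$ is $\Gamma$-equivariant). Concatenating orthonormal bases of the two summands gives one of $V$ and adds the corresponding densities, so directly from the definition $\dim_\Gamma V=\dim_\Gamma\ker p+\dim_\Gamma(\ker p)^{\perp}$. It then suffices to prove $\dim_\Gamma\ker p=\dim_\Gamma U$ and $\dim_\Gamma(\ker p)^{\perp}=\dim_\Gamma W$. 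Both reduce to the following \emph{key lemma}: if $T\colon A\to B$ is a bounded $\Gamma$-equivariant operator between Hilbert $\Gamma$-modules that is injective with dense image, then $\dim_\Gamma A=\dim_\Gamma B$. Indeed $i\colon U\to\ker p$ is such a map, and so is $p|_{(\ker p)^{\perp}}\colon(\ker p)^{\perp}\to W$: its kernel is $(\ker p)^{\perp}\cap\ker p=\{0\}$ and its image equals $\im p$, which is dense in $W$.

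To prove the key lemma I would pass to the von Neumann trace. Writing $\dim_\Gamma A=\mathrm{tr}_{\Gamma}P_A$ and $\dim_\Gamma B=\mathrm{tr}_{\Gamma}P_B$, where $P_A,P_B$ are the orthogonal projections onto $A,B$ inside the ambient $L^2$ spaces and $\mathrm{tr}_{\Gamma}$ integrates the pointwise trace of the Schwartz kernel over a fundamental domain, one checks (via the kernel $\sum_i\phi_i(\tilde x)\otimes\overline{\phi_i(\tilde y)}$) that $\mathrm{tr}_{\Gamma}P_V$ recovers $\dim_\Gamma V$. Realising $A,B$ as submodules of a common $H=L^2(\widetilde M,\widetilde E_A\oplus\widetilde E_B)$ and extending $T$ by zero, take the polar decomposition $T=U|T|$ with $|T|=(T^*T)^{1/2}$. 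Since $T^*$ and the spectral calculus are $\Gamma$-equivariant, $U$ is a $\Gamma$-equivariant partial isometry whose initial projection is onto $\overline{\im|T|}=(\ker T)^{\perp}=A$ and whose final projection is onto $\overline{\im T}=B$, i.e.\ $U^*U=P_A$ and $UU^*=P_B$. The trace property $\mathrm{tr}_{\Gamma}(S^*S)=\mathrm{tr}_{\Gamma}(SS^*)$ then gives $\dim_\Gamma A=\mathrm{tr}_{\Gamma}(U^*U)=\mathrm{tr}_{\Gamma}(UU^*)=\dim_\Gamma B$. I expect this lemma to be the main obstacle: nonnegativity and additivity over orthogonal sums are immediate, but invariance of $\dim_\Gamma$ under weak $\Gamma$-isomorphisms rests on the cyclicity $\mathrm{tr}_{\Gamma}(ST)=\mathrm{tr}_{\Gamma}(TS)$ of the group–von Neumann trace together with the $\Gamma$-equivariance of polar decomposition, which are the genuinely non-formal ingredients and the technical heart of the argument.
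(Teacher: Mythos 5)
Your proposal is correct, but note that the paper does not prove this proposition at all: it simply cites L\"uck, \emph{$L^2$-invariants}, Theorem 1.12. Your argument --- nonnegativity of the density for faithfulness, the orthogonal splitting $V=\ker p\oplus(\ker p)^{\perp}$, and invariance of $\dim_\Gamma$ under weak isomorphisms via the $\Gamma$-equivariant polar decomposition and the tracial identity $\mathrm{tr}_\Gamma(S^*S)=\mathrm{tr}_\Gamma(SS^*)$ --- is essentially the standard proof given in that reference, and you correctly single out the tracial property of the von Neumann trace as the one genuinely non-formal ingredient.
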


\subsection{Proof of the $L^2$ Fr\"olicher inequalities}

In the same setting as before, denote by $L^2\Lambda^{k}\widetilde{M}$, $L^2\Lambda^{k}_\C\widetilde{M}$ and $L^2\Lambda^{p,q}\widetilde{M}$ the Hilbert spaces of $L^2$ $k$-forms, complexified $k$-forms and $(p,q)$-forms on $\widetilde M$ with respect to the pull-back Hermitian metric $\tilde g$.
Each bundle of forms on the covering $\widetilde{M}$, \textit{e.g.}, $\Lambda^{k}\widetilde{M}$, endowed with the natural extension to forms of the metric $\tilde g$, is isometrically isomorphic to the relative pull-back bundle, \textit{e.g.}, $\pi^*\Lambda^{k}{M}$, endowed with the natural pull-back metric.

For $h\in[0,1]$, we consider on the algebra of complexified forms $A^\bullet_\C=\cinf(\widetilde{M},\Lambda^{\bullet}_\C\widetilde{M})$ the operators
\[
D_h:=\delbar+\delbar^*+h\del+h\del^*,
\]
which are elliptic and formally self-adjoint. Thanks to the completeness of the pull-back metric, the restrictions to smooth and compactly supported forms of the operators $D_h$ on $(\widetilde{M},\tilde g)$ are essentially self-adjoint, and the same holds for their positive integer powers $D_h^m$ \cite[Theorem 2.2]{Che}. For simplicity, we indicate this unique self-adjoint extension on $L^2\Lambda^{\bullet}_\C\widetilde{M}$ of $D_h^m$ by the same symbol.

The operators $D_h^m$ 
%on $A^\bullet_\C$ and on $L^2\Lambda^{\bullet}_\C\widetilde{M}$
are $\Gamma$-equivariant, therefore the kernels $\ker D_h^m\subseteq L^2\Lambda^{\bullet}_\C\widetilde{M}$ are Hilbert $\Gamma$-modules. We can then compute the Von Neumann dimensions 
\[
\dim_\Gamma (\ker D_h^m),
\]
which are finite since $D_h^m$ are elliptic and the manifold $M$ is compact by \cite[Propositions 2.4, 4.15]{Ati}. 
Furthermore, this dimension does not depend on the choice of the pull-back metric, therefore depending only on the compact manifold $M$ and on the covering, whose dependence is denoted by $\Gamma$.

Let us denote by $(D_h^m)^k$ the restriction of $D_h^m$ to $k$-forms. In particular,
$(D_1^2)^k$ is the Hodge Laplacian $dd^*+d^*d$ restricted to $k$-forms, and $(D_0^2)^k$ is the sum for $p+q=k$ of the Dolbeault Laplacians $\delbar\delbar^*+\delbar^*\delbar$ restricted to $(p,q)$-forms.
Note that
\[
\dim_\Gamma (\ker (D_1)^k)=b^k_{\Gamma}(M),\ \ \ \ \ \dim_\Gamma (\ker (D_0)^k)=\sum_{p+q=k}h^{p,q}_{\delbar,\Gamma}(M),
\]
where $b^k_{\Gamma}(M)$ and $h^{p,q}_{\delbar,\Gamma}(M)$ are the $L^2$-Betti and Hodge numbers as defined in the introduction.
For the other values of $h\in(0,1)$ we have the following result. Its proof is a generalisation of
\cite[Corollary 2.9]{Po}.
\begin{proposition}\label{proposition popovici 1}
Let $\pi:\widetilde{M} \rightarrow M$ be a normal $\Gamma$-covering of a compact complex manifold $M$. Let $g$ be an arbitrary Hermitian metric on $M$ and endow $\widetilde M$ with the pull-back metric $\tilde g$. For all $h\in(0,1)$ we have an isomorphism of Hilbert $\Gamma$-modules
\[
\ker (D_h)^k\simeq \ker (D_1)^k.
\]
Consequently
\[
\dim_\Gamma (\ker (D_h)^k)=b^k_{\Gamma}(M).
\]
\end{proposition}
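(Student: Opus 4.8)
The plan is to recognise $D_h$ as the ``Dirac-type'' operator $P_h+P_h^*$ attached to the single first-order operator $P_h:=\delbar+h\del$, and then to transport the whole complex $(A^\bullet_\C,P_h)$ onto the de Rham complex $(A^\bullet_\C,d)$ by an explicit $\Gamma$-equivariant bundle automorphism. First I would observe that $P_h^2=0$: since $\delbar^2=\del^2=0$ and $\del\delbar+\delbar\del=0$ on any complex manifold, $P_h^2=\delbar^2+h(\delbar\del+\del\delbar)+h^2\del^2=0$, and dually $(P_h^*)^2=0$. As $h$ is real, $P_h^*=\delbar^*+h\del^*$ and $D_h=P_h+P_h^*$, whence $D_h^2=(P_h+P_h^*)^2=P_hP_h^*+P_h^*P_h$ is exactly the Laplacian of the Hilbert complex $(L^2\Lambda^\bullet_\C\widetilde M,P_h)$. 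Because $D_h$ is self adjoint one has $\ker (D_h)^k=\ker (D_h^2)^k$, i.e.\ the space of $P_h$-harmonic $k$-forms. The complex $(A^\bullet_\C,P_h)$ is elliptic (its symbol sequence is conjugate to that of $d$, see below), so by the $L^2$ Hodge theory recalled in Section~\ref{section preliminaries} — the covering has bounded geometry and $\tilde g$ is complete — this harmonic space is $\Gamma$-equivariantly isomorphic to the reduced cohomology $L^2\overline{H}^k(\widetilde M,P_h):=\ker P_h/\overline{\im P_h}$ in degree $k$.

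Next I would introduce the bundle automorphism $T_h$ of $\Lambda^\bullet_\C\widetilde M$ acting as multiplication by $h^p$ on the summand $\Lambda^{p,q}$. A short degree computation gives $T_h\,d\,T_h^{-1}=\delbar+h\del=P_h$, so that $T_h^{-1}$ is a chain map intertwining $P_h$ and $d$; at the symbol level this shows $(A^\bullet_\C,P_h)$ is elliptic. Since $h>0$, the operator $T_h$ is an order-zero $\cinf$-bounded bundle isomorphism, bounded and invertible with bounded inverse $T_{1/h}$, and it is $\Gamma$-equivariant (deck transformations preserve the bidegree decomposition and commute with scalar multiplication); hence it preserves the maximal and minimal domains and intertwines the closed extensions. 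Because $\ker P_h=T_h(\ker d)$ and $\overline{\im P_h}=T_h(\overline{\im d})$, the map $T_h^{-1}$ descends to an isomorphism of Hilbert $\Gamma$-modules $L^2\overline{H}^k(\widetilde M,P_h)\cong L^2\overline{H}^k_{dR}(\widetilde M)$. Composing with the two Hodge isomorphisms of the previous paragraph yields a $\Gamma$-equivariant isomorphism $\ker (D_h)^k\simeq\ker (D_1)^k$, and the displayed dimension identity then follows from the invariance of the Von Neumann dimension under isomorphism of Hilbert $\Gamma$-modules, together with $\dim_\Gamma(\ker (D_1)^k)=b^k_\Gamma(M)$ (Proposition~\ref{proposition properties von neumann dimension}).

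The step I expect to be the main obstacle — and the reason one cannot simply transport harmonic forms to harmonic forms — is the behaviour of the adjoint. The same $T_h^{-1}$ that conjugates $P_h$ to $d$ conjugates $P_h^*$ to $T_h^{-2}d^*T_h^2$, not to $d^*$; equivalently the similarity carrying $P_h^*$ to $d^*$ is $T_h(\,\cdot\,)T_h^{-1}$, the \emph{opposite} of the one carrying $P_h$ to $d$. Concretely $\ker (D_h)=\ker P_h\cap\ker P_h^*=T_h(\ker d)\cap T_h^{-1}(\ker d^*)$, an intersection of two differently transformed subspaces, so $T_h$ does not map $\ker (D_1)$ onto $\ker (D_h)$ and the two harmonic spaces are genuinely distinct subspaces of $L^2\Lambda^\bullet_\C\widetilde M$. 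This is precisely why the argument must be routed through reduced cohomology, where only $P_h$ and not its adjoint enters, and the Hodge isomorphism is then used to return to harmonic representatives. In the $L^2$/covering setting the two points requiring care are that one works with \emph{reduced} rather than unreduced cohomology (so that the Hodge isomorphism $\ker\Delta\cong L^2\overline{H}$ is available) and that $T_h$ respects the domains of the closed extensions; both are guaranteed by the bounded-geometry hypotheses collected in Section~\ref{section preliminaries}.
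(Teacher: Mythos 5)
Your proposal is correct and follows essentially the same route as the paper: your $P_h=\delbar+h\del$ is the paper's $d_h$, your conjugating automorphism $T_h$ (multiplication by $h^p$ on $(p,q)$-forms) is exactly the paper's $\theta_h$, and both arguments pass through the reduced $L^2$ cohomologies and the $L^2$ Hodge isomorphisms to identify the harmonic spaces as Hilbert $\Gamma$-modules. Your closing remark on why $T_h$ cannot directly map harmonic forms to harmonic forms (the adjoint is conjugated the ``wrong way'') is a correct and welcome clarification of why the detour through reduced cohomology is needed, but it does not change the substance of the argument.
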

\begin{proof}
The metric $\tilde g$ is complete, therefore the minimal and maximal closed extensions respectively of $d$ and 
\[
d_h:=\delbar+h\del
\]
coincide by Section \ref{section preliminaries}. We denote these extensions, which are closed and densely defined operators between the Hilbert spaces $L^2\Lambda^k_\C \widetilde M\to L^2\Lambda^{k+1}_\C \widetilde M$, by the same symbols $d$ and $d_h$. Note that $d_h^2=0$. Therefore, on $(\widetilde M, \tilde g)$ there are well defined reduced cohomology spaces
\begin{align*}
&L^2 \bar H^k_{dR,\Gamma}(M):=\frac{L^2\Lambda^{k}_\C\widetilde M\cap\ker d}{\c{\im d}}, &L^2 \bar H^k_{d_h,\Gamma}(M):=\frac{L^2\Lambda^{k}_\C\widetilde M\cap\ker d_h}{\c{\im d_h}}.
\end{align*}
We prove that these two spaces are isomorphic for $h\in(0,1)$.
For $h>0$ we define the bounded and bijective operator
\begin{align*}
&\theta_h:L^2\Lambda^{p,q}\widetilde M\to L^2\Lambda^{p,q}\widetilde M &\alpha\mapsto h^p\alpha.
\end{align*}
A direct verification shows that $\theta_h$ induces an isomorphism
\begin{equation}\label{equation isomorphism de rham dh}   
L^2 \bar H^k_{dR,\Gamma}(M)\simeq L^2 \bar H^k_{d_h,\Gamma}(M),
\end{equation}
which actually is an isomorphism of Hilbert $\Gamma$-modules.
In fact, $d_h\theta_h u=\theta_h du$, therefore $\theta_hu\in\ker d_h\iff u\in\ker d$, and $\theta_hu=d_hv$ if and only if $u=d(\theta_h^{-1}v)$, so $\theta_hu\in\im d_h\iff u\in\im d$ and the same equivalence passes to the closures.

By standard $L^2$ Hodge theory (see Section \ref{section preliminaries}) there are isomorphisms induced by the identity
\begin{align*}
&\ker (D_1)^k\simeq L^2 \bar H^k_{dR,\Gamma}(M), &\ker (D_h)^k\simeq L^2 \bar H^k_{d_h,\Gamma}(M)
\end{align*}
which actually are isomorphisms of Hilbert $\Gamma$-modules. This ends the proof.
\end{proof}

Denote by $\{E_{h,\lambda}^k\}_{\lambda\in\R}$ the spectral family of the non-negative self-adjoint operator $(D_h^2)^k$. Since $(D_h^2)^k$ is $\Gamma$-equivariant, each projection $E_{h,\lambda}^k$ commutes with the action of $\Gamma$. Therefore, since $\im(E_{h,\lambda}^k)$ is closed, we get that $\im(E_{h,\lambda}^k)$ is a $\Gamma$-module in $L^2\Lambda^k_\C\widetilde M$. The spectral density function $N_{h,\Gamma}^k(\lambda)$ for $\lambda\in\R$ can then be defined (\cite[eq (1.1)]{GS}, cf. \cite[Lemma 2.3]{Lu}) as
\[
N_{h,\Gamma}^k(\lambda):=\dim_\Gamma(\im (E_{h,\lambda}^k)).
\]
From the definition it follows that $N_{h,\Gamma}^k(0)=\dim_\Gamma (\ker (D_h^2)^k)=\dim_\Gamma (\ker (D_h)^k)$ and $N_{h,\Gamma}^k(\tau)\ge N_{h,\Gamma}^k(\sigma)$ for $\tau\ge\sigma$. Moreover \cite[p. 376]{GS} (cf. \cite[Theorem 1.12]{Lu})
\[
\lim_{\lambda\to0^+}N_{h,\Gamma}^k(\lambda)=\dim_\Gamma (\ker (D_h)^k).
\]
Notice that, if $\pi:M\to M$ is the trivial covering, then the spectrum of $(D_h^2)^k$ is discrete and $N_{h,\Gamma}^k(\lambda)$ boils down to the sum of the dimensions of all eigenspaces $H_\sigma$ of $(D_h^2)^k$ such that $\sigma\le\lambda$.

We now show an inequality between $N_{h,\Gamma}^k$ and $N_{0,\Gamma}^k$, by verifying that the functional analytic assumptions of Theorem \ref{theorem injective projection} are satisfied in our geometric setting.

\begin{lemma}\label{lemma covering convergence norm resolvent sense}
Let $\pi:\widetilde{M}\to M$ be a normal $\Gamma$-covering of a compact complex manifold. Let $g$ be an arbitrary Hermitian metric on $M$ and endow $\widetilde M$ with the pull-back metric $\tilde g$. Then $D_h\to D_0$ and $D_h^2\to D_0^2$ in the norm resolvent sense for $h\to0$. %Furthermore, the resolvents converge $((D_h^k)^2+I)^{-1}\to ((D_0^k)^2+I)^{-1}$ in the operator norm.
\end{lemma}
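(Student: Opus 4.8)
The plan is to verify the hypotheses of the convergence criterion in Lemma \ref{lemma reed simon} (Reed--Simon). The key observation is that, on a manifold of bounded geometry such as our covering $\widetilde M$, the operators $D_h$ all share a common domain independent of $h$. Indeed, since $\del,\delbar,\del^*,\delbar^*$ are all first order $\cinf$-bounded differential operators and each $D_h$ is uniformly elliptic, Proposition \ref{proposition shubin common domain} identifies $\Dom(D_h)=W^{1,2}(\widetilde M,\Lambda^\bullet_\C)$ for \emph{every} $h\in[0,1]$, with the graph norm of $D_h$ equivalent to $\lv\cdot\rv_{1,2}$. In particular $\Dom(D_h)=\Dom(D_0)$, and the equivalence of graph norms means that endowing this common domain with the graph norm of $D_0$ is the right setup for Lemma \ref{lemma reed simon}.

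\smallskip

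Next I would estimate $\lv(D_h-D_0)\phi\rv$. By definition $D_h-D_0=h(\del+\del^*)$, so $(D_h-D_0)\phi=h(\del+\del^*)\phi$. Since $\del$ and $\del^*$ are first order $\cinf$-bounded operators on a bundle of bounded geometry, they are bounded as maps $W^{1,2}\to L^2$; that is, there is a constant $C>0$, independent of $h$ and $\phi$, with
\[
\lv(\del+\del^*)\phi\rv_{L^2}\le C\lv\phi\rv_{1,2}.
\]
Using the equivalence of $\lv\cdot\rv_{1,2}$ with the graph norm $\lv\phi\rv_{L^2}+\lv D_0\phi\rv_{L^2}$ from Proposition \ref{proposition shubin common domain}, there is a constant $C'>0$ so that
\[
\lv(D_h-D_0)\phi\rv_{L^2}=h\,\lv(\del+\del^*)\phi\rv_{L^2}\le h\,C'\bigl(\lv\phi\rv_{L^2}+\lv D_0\phi\rv_{L^2}\bigr).
\]
Consequently
\[
\sup_{\lv\phi\rv_{L^2}+\lv D_0\phi\rv_{L^2}=1}\lv(D_h-D_0)\phi\rv_{L^2}\le h\,C'\longrightarrow 0
\]
as $h\to0$, so Lemma \ref{lemma reed simon} gives $D_h\to D_0$ in the norm resolvent sense.

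\smallskip

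For the squares $D_h^2\to D_0^2$ I would apply the same criterion, now with common domain $W^{2,2}$: Proposition \ref{proposition shubin common domain} applied to the uniformly elliptic second order operators $D_h^2$ gives $\Dom(D_h^2)=W^{2,2}(\widetilde M,\Lambda^\bullet_\C)$ for all $h$, with graph norm equivalent to $\lv\cdot\rv_{2,2}$. The difference expands as
\[
D_h^2-D_0^2=h\bigl(D_0(\del+\del^*)+(\del+\del^*)D_0\bigr)+h^2(\del+\del^*)^2,
\]
and each term is a $\cinf$-bounded differential operator of order at most $2$, hence bounded $W^{2,2}\to L^2$ with $h$-independent constant. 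Collecting these bounds and using the graph-norm equivalence exactly as above yields
\[
\sup_{\lv\phi\rv_{L^2}+\lv D_0^2\phi\rv_{L^2}=1}\lv(D_h^2-D_0^2)\phi\rv_{L^2}\le (C_1 h+C_2 h^2)\longrightarrow0,
\]
and Lemma \ref{lemma reed simon} again gives convergence in the norm resolvent sense.

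\smallskip

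The main obstacle is not any single estimate but establishing that the domains are genuinely $h$-independent; this is precisely what bounded geometry buys us through Proposition \ref{proposition shubin common domain}, and it is why the preliminaries on uniform ellipticity and Sobolev spaces were set up. Once the common-domain and graph-norm-equivalence facts are in hand, the remaining work is the elementary operator-norm bookkeeping sketched above, where the crucial point is simply that the prefactor $h$ (respectively $h$ and $h^2$) in front of the $\cinf$-bounded operators drives the supremum to zero uniformly.
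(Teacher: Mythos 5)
Your proposal is correct and follows essentially the same route as the paper: verify the hypotheses of Lemma \ref{lemma reed simon} using the common domain and graph-norm equivalences supplied by Proposition \ref{proposition shubin common domain}, then bound $(D_h-D_0)\phi=h(\del+\del^*)\phi$ and the expansion of $D_h^2-D_0^2$ by $\cinf$-bounded operators of the appropriate order, so that the explicit factors of $h$ force the supremum to zero. The only cosmetic difference is that you phrase the estimates via boundedness $W^{s,2}\to L^2$ while the paper states them first on $\cinf_c$ and extends by density, and the paper simplifies the anticommutator $D_0(\del+\del^*)+(\del+\del^*)D_0$ to $\delbar\del^*+\del^*\delbar+\del\delbar^*+\delbar^*\del$; neither changes the substance.
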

\begin{proof}
The first claim is proved if we manage to apply Lemma \ref{lemma reed simon} to the family of self-adjoint operators $D_h$, for $h\in[0,1]$. They have a common domain by Proposition \ref{proposition shubin common domain}. Moreover, for each $h\in[0,1]$ the graph norm of $D_h$ is equivalent to the graph norm of the operator $\del+\del^*$. The remaining assumption that we have to check is that, as $h\to0$,
\begin{equation}\label{equation 1 lemma convergence}
\sup_{\lv \phi\rv_{L^2}+\lv D_0\phi\rv_{L^2}=1} \rv(D_h-D_0)\phi\rv_{L^2}\to0
\end{equation}
with $\phi$ in the domain of $D_0$.
First note that for a smooth and compactly supported form $\phi$ we have
\[
\lv(D_h-D_0)\phi\rv_{L^2}=h\lv \del\phi+\del^*\phi\rv_{L^2}\le h(\lv\phi\rv_{L^2}+\lv \del\phi+\del^*\phi\rv_{L^2})\le  h C (\lv\phi\rv_{L^2}+\lv D_0\phi\rv_{L^2}),
\]
where $C>0$ is the constant given by the equivalence of the graph norms of $\del+\del^*$ and $D_0$. Since smooth and compactly supported forms are dense in the domain of $D_h$ for all $h\in[0,1]$ with respect to the corresponding graph norm, we can conclude that the above inequality extends to the whole domain of $D_0$. Now \eqref{equation 1 lemma convergence} follows immediately.

The second claim is proved precisely with the same strategy: the operators $D_h^2$ for $h\in[0,1]$ have a common domain by Proposition \ref{proposition shubin common domain}, and for each $h\in[0,1]$ the graph norm of $D_h^2$ is equivalent to the graph norm of $\del\del^*+\del^*\del$; furthermore
\begin{equation}\label{equation 2 lemma convergence}
\sup_{\lv \phi\rv_{L^2}+\lv D_0^2\phi\rv_{L^2}=1} \rv(D_h^2-D_0^2)\phi\rv_{L^2}\to0
\end{equation}
with $\phi$ in the domain of $D_0^2$.
More precisely, for a smooth and compactly supported form $\phi$
\begin{align*}
\lv(D_h^2-D_0^2)\phi\rv_{L^2}&=\lv h^2(\del\del^*+\del^*\del)\phi+h(\delbar\del^*+\del^*\delbar+\del\delbar^*+\delbar^*\del)\phi\rv_{L^2}\\
&\le h^2(\lv\phi\rv_{L^2}+\lv (\del\del^*+\del^*\del)\phi\rv_{L^2})+h\lv(\delbar\del^*+\del^*\delbar+\del\delbar^*+\delbar^*\del)\phi\rv_{L^2}\\
&\le h^2 K(\lv\phi\rv_{L^2}+\lv D_0^2\phi\rv_{L^2})+hK'\lv\phi\rv_{2,2}\\
&\le h(h K+K'K'')(\lv\phi\rv_{L^2}+\lv D_0^2\phi\rv_{L^2}),
\end{align*}
where $K>0$ is the constant given by the equivalence of the graph norms of $\del\del^*+\del^*\del$ and $D_0^2$, $K'>0$ is a constant given by the definition of the Sobolev norm in canonical coordinates, and $K''>0$ is the constant given by the equivalence of the Sobolev norm with the graph norm of $D_0^2$. Since smooth and compactly supported forms are dense in the domain of $D_h^2$ for all $h\in[0,1]$ with respect to the corresponding graph norm, we can conclude that the above inequality extends to the whole domain of $D_0^2$. Now \eqref{equation 2 lemma convergence} follows immediately.
\end{proof}

\begin{corollary}
    Let $\pi:\widetilde{M}\to M$ be a normal $\Gamma$-covering of a compact complex manifold of complex dimension $n$. Let $g$ be an arbitrary Hermitian metric on $M$ and endow $\widetilde M$ with the pull-back metric $\tilde g$. Then $(D_h^2)^k\to (D_0^2)^k$ in the norm resolvent sense for $h\to0$ and for all $k=0,\dots,n$.
\end{corollary}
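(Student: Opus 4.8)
The plan is to obtain the degree-wise statement as an immediate consequence of the convergence $D_h^2\to D_0^2$ proved in Lemma \ref{lemma covering convergence norm resolvent sense}, together with the orthogonal decomposition of the space of $L^2$ forms by total degree.

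First I would observe that
\[
L^2\Lambda^\bullet_\C\widetilde M=\bigoplus_{k=0}^{2n}L^2\Lambda^k_\C\widetilde M
\]
is an orthogonal decomposition which reduces $D_h^2$, so that $D_h^2=\bigoplus_{k}(D_h^2)^k$ is block diagonal and each block $(D_h^2)^k$ is a non-negative self adjoint operator on $L^2\Lambda^k_\C\widetilde M$. To see that $D_h^2$ preserves the total degree, expand
\[
D_h^2=(\delbar+\delbar^*)^2+h^2(\del+\del^*)^2+h\big[(\delbar+\delbar^*)(\del+\del^*)+(\del+\del^*)(\delbar+\delbar^*)\big].
\]
The first two summands are the degree-preserving Laplacians $\Delta_\delbar$ and $\Delta_\del$; in the cross term, the degree-raising part $\delbar\del+\del\delbar$ and the degree-lowering part $\delbar^*\del^*+\del^*\delbar^*$ both vanish by $\del\delbar+\delbar\del=0$ and its $L^2$ formal adjoint, leaving only the degree-preserving terms $\delbar\del^*+\delbar^*\del+\del\delbar^*+\del^*\delbar$.

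Next I would pass this block structure to the resolvents. For $\lambda\in\C$ with $\img\lambda\ne0$ one has $\lambda\notin\spec(D_h^2)$ and
\[
R_\lambda(D_h^2)=\bigoplus_{k=0}^{2n}R_\lambda\big((D_h^2)^k\big),
\]
since inverting a block-diagonal operator reduces to inverting each block. Because the operator norm of a block-diagonal operator is the supremum of the norms of its blocks, for every $k$ we get
\[
\big\|R_\lambda\big((D_h^2)^k\big)-R_\lambda\big((D_0^2)^k\big)\big\|\le\big\|R_\lambda(D_h^2)-R_\lambda(D_0^2)\big\|.
\]

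Finally, Lemma \ref{lemma covering convergence norm resolvent sense} gives that the right-hand side tends to $0$ as $h\to0$ for every $\lambda$ off the real axis, so the left-hand side does as well; this is precisely the convergence $(D_h^2)^k\to(D_0^2)^k$ in the norm resolvent sense, for each $k=0,\dots,n$. I do not expect any genuine obstacle here: the whole content is that norm-resolvent convergence of a family respecting a fixed orthogonal grading is inherited by each graded piece, the only point requiring a line of verification being the degree-preservation of $D_h^2$ recorded above.
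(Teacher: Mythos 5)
Your proof is correct and follows the same route as the paper, which disposes of the corollary in one line by invoking Lemma \ref{lemma covering convergence norm resolvent sense} together with the block decomposition $D_h^2=\bigoplus_k(D_h^2)^k$; you have merely made explicit the two details the paper leaves implicit (that $D_h^2$ preserves total degree because the degree-shifting cross terms cancel, and that norm-resolvent convergence of a block-diagonal family is inherited by each block).
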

\begin{proof}
    This follows immediately from Lemma \ref{lemma covering convergence norm resolvent sense} since $D_h^2=\underset{k=0,\dots,n}{\bigoplus}(D_h^2)^k$.
\end{proof}

\begin{remark}
To prove the following result, we want to apply Theorem \ref{theorem injective projection} to the family $(D_h^2)^k$ for $k=0,\dots,n$. Notice that, in light of Remark \ref{remark more general assumptions}, we can weaken the assumption that $D_h^2\to D_0^2$ in the norm resolvent sense to the convergence in operator norm of $(D_h^2+I)^{-1}\to (D_0^2+I)^{-1}$. This latter convergence can alternatively follow from the convergence in the norm resolvent sense of $D_h\to D_0$, by noticing $(D_h^2+I)^{-1}=(D_h+i)^{-1}(D_h-i)^{-1}$.
\end{remark}

\begin{proposition}\label{proposition l2 frolicher spectral density function}
Let $\pi:\widetilde{M}\to M$ be a normal $\Gamma$-covering of a compact complex manifold of complex dimension $n$. Let $g$ be an arbitrary Hermitian metric on $M$ and endow $\widetilde M$ with the pull-back metric $\tilde g$. Then for all $k=0,\dots,n$ and for every $\tau>0$ there exists an $h_\tau>0$ such that for all $0\le\sigma<\tau$ and $0<h<h_\tau$ 
\[
N_{h,\Gamma}^k(\sigma)\le N_{0,\Gamma}^k(\tau).
\]
\end{proposition}
\begin{proof}
By Lemma \ref{lemma covering convergence norm resolvent sense} and Theorem \ref{theorem injective projection} we know that
\[
E_{0,\tau|\im (E_{h,\sigma}^k)}^k:\im (E_{h,\sigma}^k)\to\im (E_{0,\tau}^k)
\]
is an injective morphism of $\Gamma$-modules. By Proposition \ref{proposition properties von neumann dimension}, this implies the statement.
\end{proof}

We are now ready to prove the $L^2$ Fr\"olicher inequalities.

\begin{theorem}\label{theorem l2 frolicher}
Let $\pi:\widetilde{M}\to M$ be a normal $\Gamma$-covering of a compact complex manifold of complex dimension $n$. Then for all $k=0,\dots,n$
\begin{equation}\label{equation inequality frolicher l2 main}
b^k_{\Gamma}(M)\le \sum_{p+q=k}h^{p,q}_{\delbar,\Gamma}(M).
\end{equation}
\end{theorem}
\begin{proof}
Let us fix an arbitrary Hermitian metric $g$ on $M$ and endow $\widetilde M$ with the pull-back metric $\tilde g$. Now the claim is obtained by applying Propositions \ref{proposition popovici 1} and \ref{proposition l2 frolicher spectral density function} with $\sigma=0$ and taking the limit for $\tau\to0$.
\end{proof}

\begin{corollary}
Let $M$ be a compact complex manifold. Then
\[
b^k(M)\le\sum_{p+q=k}h^{p,q}_\delbar(M).
\]
\end{corollary}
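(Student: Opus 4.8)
The plan is to recover the classical Fr\"olicher inequality as the special case of Theorem \ref{theorem l2 frolicher} in which the covering is trivial, so that no new analysis is needed and the entire content is inherited from the $L^2$ statement.

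First I would take $\pi\colon M\to M$ to be the trivial covering, whose group of deck transformations $\Gamma=\{e\}$ is trivial; replacing $M$ by each of its connected components if necessary, $M$ is a normal $\Gamma$-covering of itself and the hypotheses of Theorem \ref{theorem l2 frolicher} are met. Next I would unwind the $L^2$ invariants in this situation. As recalled in Section \ref{section l2 frolicher} (see \cite[p.~45]{Ati}), for the trivial covering the Von Neumann dimension $\dim_\Gamma$ agrees with the ordinary dimension of a complex vector space. Since $M$ is compact, the $L^2$ spaces of forms coincide with the full spaces of smooth forms, and the harmonic spaces $\ker (D_1)^k$ and $\ker (D_0)^k$ are precisely the finite-dimensional de Rham and Dolbeault harmonic spaces. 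Consequently
\[
b^k_{\Gamma}(M)=\dim_\Gamma\bigl(\ker (D_1)^k\bigr)=\dim_{\C}\ker (D_1)^k=b^k(M),
\]
and likewise $h^{p,q}_{\delbar,\Gamma}(M)=h^{p,q}_\delbar(M)$ for all $p,q\in\N$.

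Finally I would apply Theorem \ref{theorem l2 frolicher} with this choice of covering and substitute the identifications above, which immediately yields
\[
b^k(M)\le\sum_{p+q=k}h^{p,q}_\delbar(M).
\]
There is essentially no obstacle here: everything has already been established in the $L^2$ setting, and the only point to check is the routine fact that the Von Neumann dimension reduces to the ordinary vector space dimension on the trivial covering. Worth emphasising, as the authors do in the introduction, is that this argument produces the classical inequality \emph{without} invoking the Fr\"olicher spectral sequence, relying instead on the spectral-projector injection of Theorem \ref{theorem injective projection} together with the norm-resolvent convergence $D_h\to D_0$.
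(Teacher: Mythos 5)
Your proposal is correct and follows exactly the paper's own proof: specialise Theorem \ref{theorem l2 frolicher} to the trivial covering $\pi\colon M\to M$, where the Von Neumann dimension reduces to the ordinary vector space dimension and the $L^2$ invariants become the classical Betti and Hodge numbers. The extra unwinding of the identifications is a welcome elaboration but does not change the argument.
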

\begin{proof}
Just take $\pi:M\to M$ to be the trivial covering and apply Theorem \ref{theorem l2 frolicher}.
\end{proof}

\begin{corollary}\label{corollary injective projection}
    In the same assumptions of Proposition \ref{proposition l2 frolicher spectral density function}, if $\im (D_0^2)^k$ is closed, then there exists  $h'>0$ such that for all $0<h<h'$ the projection
    \[
    P_{|\ker (D_h^2)^k}:\ker (D_h^2)^k\to\ker(D_0^2)^k
    \]
    is injective.
\end{corollary}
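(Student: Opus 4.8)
The plan is to observe that this corollary is the concrete incarnation, for the family $\{(D_h^2)^k\}_{h\in[0,1]}$ acting on the Hilbert space $H=L^2\Lambda^k_\C\widetilde M$, of the abstract Corollary \ref{corollary closed image} proved in Section \ref{section projectors}. So the entire argument will reduce to checking that the hypotheses of that abstract statement hold in the present setting and then translating its conclusion into the notation used here.

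First I would verify the three hypotheses. Each $(D_h^2)^k$ is non-negative and self adjoint, since $D_h$ is essentially self adjoint, whence $(D_h)^2$, and its restriction $(D_h^2)^k$ to $k$-forms, is a non-negative self adjoint operator. The required norm resolvent convergence $(D_h^2)^k\to(D_0^2)^k$ as $h\to0$ is exactly the content of the corollary following Lemma \ref{lemma covering convergence norm resolvent sense} (it follows from the convergence $D_h^2\to D_0^2$ together with the orthogonal decomposition $D_h^2=\bigoplus_{k}(D_h^2)^k$). Finally, the closedness of $\im(D_0^2)^k$ is precisely the hypothesis of the present corollary. These are exactly the hypotheses of Corollary \ref{corollary closed image}.

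With these checks in place, Corollary \ref{corollary closed image} applied to this family produces an $h'>0$ such that for all $0<h<h'$ the restriction of the spectral projection $E_{0,0}^k$ to $\ker(D_h^2)^k$ is injective as a map into $\ker(D_0^2)^k$. It then remains only to identify this spectral projection with the orthogonal projection $P$ appearing in the statement: since $\im(D_0^2)^k$ is closed, the operator $(D_0^2)^k$ has a spectral gap, so $\im(E_{0,0}^k)=\ker(D_0^2)^k$ and $E_{0,0}^k$ is exactly the orthogonal projection onto the kernel, i.e.\ $P=E_{0,0}^k$, which yields the claim. I expect no substantial obstacle here, since the analytic heart of the matter — the norm resolvent convergence and the abstract injectivity argument via spectral projectors — has already been carried out; the only points demanding any care are the routine verification of the hypotheses and the identification of $P$ with $E_{0,0}^k$ through the spectral gap.
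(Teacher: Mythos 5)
Your proposal is correct and follows exactly the paper's route: the paper's proof consists of the single line ``This is Corollary \ref{corollary closed image}'', and your verification of the hypotheses (non-negativity, norm resolvent convergence from the corollary after Lemma \ref{lemma covering convergence norm resolvent sense}, closedness of the image) together with the identification $P=E_{0,0}^k$ via the spectral gap is precisely the routine unpacking the authors leave implicit.
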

\begin{proof}
    This is Corollary \ref{corollary closed image}.
\end{proof}

Similarly to $L^2 \bar H^k_{dR,\Gamma}(M)$ and $L^2 \bar H^k_{d_h,\Gamma}(M)$, we define the reduced $L^2$ Dolbeault cohomology space as
\[
L^2 \bar H^{p,q}_{\delbar,\Gamma}(M):=\frac{L^2\Lambda^{p,q}\widetilde M\cap\ker\delbar}{\overline{\im\delbar}}.
\]

\begin{corollary}\label{corollary injection l2 cohomology}
    In the same assumptions of Proposition \ref{proposition l2 frolicher spectral density function}, if $\im (D_0^2)^k$ is closed, then there exists an explicit injection of Hilbert $\Gamma$-modules
    \[
    Q_{\Gamma,\tilde g,h}:L^2 \bar H^k_{dR,\Gamma}(M)\to \bigoplus_{p+q=k} L^2 \bar H^{p,q}_{\delbar,\Gamma}(M).
    \]
\end{corollary}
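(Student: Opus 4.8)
The plan is to realise $Q_{\Gamma,\tilde g,h}$ as a composition of four morphisms of Hilbert $\Gamma$-modules, three of which are isomorphisms and the middle one being precisely the injection furnished by Corollary \ref{corollary injective projection}. First, the standard $L^2$ Hodge theory recalled in Section \ref{section preliminaries} provides identity-induced isomorphisms of Hilbert $\Gamma$-modules $\Phi\colon L^2 \bar H^k_{dR,\Gamma}(M)\xrightarrow{\sim}\ker (D_1)^k$ and, since $(D_0)^2$ preserves the bidegree and hence $\ker (D_0)^k=\bigoplus_{p+q=k}\bigl(\ker\delbar\cap\ker\delbar^*\cap L^2\Lambda^{p,q}\widetilde M\bigr)$, also $\Psi\colon\ker (D_0)^k\xrightarrow{\sim}\bigoplus_{p+q=k}L^2 \bar H^{p,q}_{\delbar,\Gamma}(M)$. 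Second, the preceding Proposition (built from the explicit bounded operator $\theta_h$) gives an isomorphism of Hilbert $\Gamma$-modules $\ker (D_h)^k\simeq\ker (D_1)^k$; I denote by $\Theta_h\colon\ker (D_1)^k\xrightarrow{\sim}\ker (D_h)^k$ its inverse. Because $D_h$ is self adjoint one has $\ker (D_h)^k=\ker (D_h^2)^k$, so $\Theta_h$ lands in $\ker (D_h^2)^k$.

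Third, the hypothesis that $\im (D_0^2)^k$ is closed lets me invoke Corollary \ref{corollary injective projection}: for all $0<h<h'$ the spectral projector $P=E_{0,0}^k$ restricts to an injective morphism of Hilbert $\Gamma$-modules $P_{|\ker (D_h^2)^k}\colon\ker (D_h^2)^k\to\ker (D_0^2)^k=\ker (D_0)^k$. I would then set
\[
Q_{\Gamma,\tilde g,h}:=\Psi\circ P_{|\ker (D_h^2)^k}\circ\Theta_h\circ\Phi.
\]
Each factor is a bounded $\Gamma$-equivariant map, hence a morphism of Hilbert $\Gamma$-modules: $\Phi$ and $\Psi$ are induced by the identity and commute with the $\Gamma$-action by construction; $\Theta_h$ is assembled from $\theta_h$, which is bounded and commutes with $\Gamma$ since it acts as multiplication by $h^p$ on the $(p,q)$-component; and the $\Gamma$-equivariance and boundedness of the spectral projector $P$ were already recorded when defining the spectral density functions. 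Since three of the four maps are isomorphisms and the remaining one is injective, the composition $Q_{\Gamma,\tilde g,h}$ is injective, as required.

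I expect no serious obstacle here: the entire content is bookkeeping, namely tracking the identifications $\ker (D_h)^k=\ker (D_h^2)^k$ and the bidegree splitting of $\ker (D_0)^k$, and confirming that every constituent map is simultaneously bounded and $\Gamma$-equivariant so that the composite genuinely lives in the category of Hilbert $\Gamma$-modules. The word \emph{explicit} in the statement is justified precisely because all four ingredients are explicit: $\Phi$ and $\Psi$ come from the identity map, $\Theta_h$ from the elementary operator $\theta_h$, and $P$ is the orthogonal projection onto $\ker (D_0^2)^k$. The only point warranting a line of care is that injectivity of the middle map requires $0<h<h'$ with $h'$ given by Corollary \ref{corollary injective projection}, so the resulting injection depends on the auxiliary choice of $h$ (as reflected in the notation $Q_{\Gamma,\tilde g,h}$).
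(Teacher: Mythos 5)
Your proof is correct and follows essentially the same route as the paper: the paper composes the $\theta_h$-induced isomorphism $L^2 \bar H^k_{dR,\Gamma}(M)\simeq L^2 \bar H^k_{d_h,\Gamma}(M)$, the Hodge-decomposition isomorphism onto $\ker(D_h^2)^k$, the injective projection of Corollary \ref{corollary injective projection}, and the Hodge-decomposition isomorphism onto $\bigoplus_{p+q=k} L^2 \bar H^{p,q}_{\delbar,\Gamma}(M)$, which is exactly your $\Psi\circ P_{|\ker (D_h^2)^k}\circ\Theta_h\circ\Phi$ once $\Theta_h\circ\Phi$ is unravelled. The only cosmetic difference is that you pass through $\ker(D_1)^k$ rather than through $L^2 \bar H^k_{d_h,\Gamma}(M)$, but these are the same maps assembled in a different order.
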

\begin{proof}
The injection $Q_{\Gamma,\tilde g,h}$ is defined as the composition of the following maps: the isomorphism \eqref{equation isomorphism de rham dh} of Proposition \ref{proposition popovici 1}
    \[
    L^2 \bar H^k_{dR,\Gamma}(M)\simeq L^2 \bar H^k_{d_h,\Gamma}(M),
    \]
    the isomorphism given by the $L^2$ Hodge decomposition for the operator $d_h$
    \[
    L^2 \bar H^k_{d_h,\Gamma}(M)\simeq \ker(D_h^2)^k,
    \]
    the injective projection of Corollary \ref{corollary injective projection}
    \[
    \ker(D_h^2)^k\to\ker(D_0^2)^k,
    \]
    and the isomorphism given by the $L^2$ Hodge decomposition for the operator $\delbar$
    \[
    \ker(D_0^2)^k\to\bigoplus_{p+q=k} L^2 \bar H^{p,q}_{\delbar,\Gamma}(M).\qedhere
    \]
\end{proof}

Non-compact and non-K\"ahler examples of $\widetilde M$ having closed image $\im D_0^2$ can be found as the universal covering of the product \cite[Corollary 2.15]{BL} of a compact K\"ahler hyperbolic manifold \cite{G} with a compact non-K\"ahler complex manifold with finite fundamental group.

\begin{corollary}\label{corollary compact injection cohomology}
Let $(M,g)$ be a compact Hermitian manifold. Then for $h>0$ sufficiently small there is an explicit injection
    \[
    Q_{g,h}: H^k_{dR}(M)\to \bigoplus_{p+q=k}  H^{p,q}_{\delbar}(M).
    \]
\end{corollary}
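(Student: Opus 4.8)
The plan is to specialise Corollary~\ref{corollary injection l2 cohomology} to the trivial covering $\pi:M\to M$, for which the group $\Gamma$ is trivial, the Von Neumann dimension reduces to the ordinary dimension of a vector space, and every Hilbert $\Gamma$-module is simply a closed subspace of $L^2\Lambda^\bullet_\C M$. Under this specialisation the injection of Hilbert $\Gamma$-modules produced there becomes an ordinary linear injection, whose source and target are the reduced $L^2$ cohomology spaces attached to the identity covering. The single hypothesis of Corollary~\ref{corollary injection l2 cohomology} that must be verified is the closedness of $\im (D_0^2)^k$.

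First I would check this closedness, which on the compact manifold $M$ is automatic. Indeed $(D_0^2)^k$ is an elliptic, formally self adjoint differential operator on a compact manifold, so its self adjoint $L^2$-extension has discrete spectrum with finite-dimensional eigenspaces. Consequently $\inf\bigl(\spec((D_0^2)^k)\setminus\{0\}\bigr)>0$, i.e.\ $(D_0^2)^k$ has a spectral gap, and hence closed image by the equivalence recalled just before Corollary~\ref{corollary closed image}. With this in hand, Corollary~\ref{corollary injection l2 cohomology} applies and yields, for every $h>0$ sufficiently small, an explicit injection from the reduced $L^2$ de Rham cohomology to the direct sum of reduced $L^2$ Dolbeault cohomologies.

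It then remains to identify these reduced $L^2$ spaces with the classical cohomologies appearing in the statement. On a compact manifold $\im d$ and $\im\delbar$ are already closed, so reduced and unreduced $L^2$ cohomology agree; moreover the $L^2$ Hodge decomposition identifies each reduced $L^2$ cohomology with its space of harmonic forms, which by standard Hodge theory on compact manifolds is isomorphic to $H^k_{dR}(M;\C)$, respectively $H^{p,q}_{\delbar}(M)$. Substituting these isomorphisms turns the injection of the previous paragraph into the desired map $Q_{g,h}:H^k_{dR}(M)\to\bigoplus_{p+q=k}H^{p,q}_{\delbar}(M)$. I do not expect any genuine obstacle here: the whole construction is already contained in Corollary~\ref{corollary injection l2 cohomology}, whose explicit description (the isomorphism $\theta_h$, the two Hodge decomposition isomorphisms, and the spectral-projector injection $\ker (D_h^2)^k\to\ker (D_0^2)^k$ of Corollary~\ref{corollary injective projection}) transports verbatim to the present setting, the only substantive input being the compactness-driven closedness of $\im (D_0^2)^k$ verified above.
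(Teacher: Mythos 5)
Your proposal is correct and follows exactly the paper's route: the paper's proof is literally ``take $\pi:M\to M$ to be the trivial covering and apply Corollary~\ref{corollary injection l2 cohomology}''. The extra details you supply (the automatic closedness of $\im (D_0^2)^k$ via the discrete spectrum on a compact manifold, and the identification of reduced $L^2$ cohomology with classical de Rham and Dolbeault cohomology) are exactly the implicit steps the paper leaves to the reader.
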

\begin{proof}
Just take $\pi:M\to M$ to be the trivial covering and apply Corollary \ref{corollary injection l2 cohomology}.
\end{proof}

\subsection{Further spectral consequences}

In this subsection we prove Corollary \ref{corollary introduction spectral}.

We start by comparing the Hodge Laplacian associated to a suitably rescaled Hilbert product $\lla,\rra_{\omega_h}$ both to the Hodge Laplacian associated to the standard $L^2$ product and to the twisted Hodge Laplacian $D_h^2$. It follows that the spectral density functions $N_{1,\Gamma}^k$ and $N_{h,\Gamma}^k$ are dilatationally equivalent.
\begin{proposition}\label{proposition popovici 2}
Let $\pi:\widetilde{M} \rightarrow M$ be a normal $\Gamma$-covering of a compact complex manifold $M$ of complex dimension $n$. Let $g$ be an arbitrary Hermitian metric on $M$ and endow $\widetilde M$ with the pull-back metric $\tilde g$. Let us fix $h\in(0,1)$. Let $L^2_{\omega_h}\Lambda^{k}\widetilde M$ be the Hilbert space given by $L^2\Lambda^{k}\widetilde M$ endowed with the product $\lla,\rra_{\omega_h}$ induced by linearity from the following product on $(p,q)$-forms: for all $\alpha,\beta\in L^2\Lambda^{p,q}\widetilde M$
\[
\lla \alpha,\beta\rra_{\omega_h}:=h^{2(p-n)}\lla\alpha,\beta\rra.
\]
Let $d^{*}_{\omega_h}$ be the formal adjoint of $d$ with respect to $\lla,\rra_{\omega_h}$, and $\Delta_{\omega_h}^k:=dd^{*}_{\omega_h}+d^{*}_{\omega_h}d$ be the associated Laplacian on $L^2_{\omega_h}\Lambda^{k}\widetilde M$. Let $\{E_{\omega_h,\lambda}^k\}_{\lambda\in\R}$ be the spectral family of $\Delta_{\omega_h}$ and $N_{\omega_h,\Gamma}^k(\lambda):=\dim_\Gamma(\im(E^k_{\omega_h,\lambda}))$ its spectral density function. Then for all $\lambda\in\R$
\[
\im(E^k_{\omega_h,\lambda})\simeq \im(E^k_{h,\lambda})
\]
are isomorphic as Hilbert $\Gamma$-modules, so that 
\begin{equation*}%\label{equation spectral density h}
N_{\omega_h,\Gamma}^k(\lambda)=N_{h,\Gamma}^k(\lambda).
\end{equation*}
Moreover, there exists $C\in(0,1]$ such that for all $\lambda\in\R$
\begin{equation}\label{equation quasi isometry spectral}
N_{1,\Gamma}^k(C\lambda)\le N_{\omega_h,\Gamma}^k(\lambda)\le N_{1,\Gamma}^k(C^{-1}\lambda).
\end{equation}
In particular, $N_{1,\Gamma}^k$ and $N_{h,\Gamma}^k$ are dilatationally equivalent: there exists $C\in(0,1]$ such that for all $\lambda\in\R$
\begin{equation}\label{equation quasi isometry spectral 2}
N_{1,\Gamma}^k(C\lambda)\le N_{h,\Gamma}^k(\lambda)\le N_{1,\Gamma}^k(C^{-1}\lambda).
\end{equation}
\end{proposition}
\begin{proof}
To prove $\im(E^k_{\omega_h,\lambda})\simeq \im(E^k_{h,\lambda})$, first note that by \cite[(iii) of Lemma 2.7]{Po} we have
\[
(D_h^2)^k=d_hd_h^*+d_h^*d_h=\theta_h \Delta_{\omega_h}^k \theta_h^{-1}.
\]
A quick direct verification may be obtained from the expressions of the formal adjoints \cite[(i),(ii) of Lemma 2.7]{Po}
\begin{align*}
\theta^*_h=\theta_h,&& d_h^*=\theta_h^{-1}d^*\theta_h,&& d^*_{\omega_h}=\theta_h^{-2}d^*\theta_h^2.
\end{align*}
Now, notice that
\[
U_h:=h^{-n}\theta_h: L^2_{\omega_h}\Lambda^k\widetilde M\to L^2\Lambda^k\widetilde M
\]
is an isometric isomorphism of Hilbert $\Gamma$-moduli, and
\[
(D_h^2)^k=U_h \Delta_{\omega_h}^k U_h^{-1}
\]
as self-adjoint operators. From the uniqueness of the spectral family, it follows
\[
E^k_{h,\lambda}=U_h E^k_{\omega_h,\lambda} U_h^{-1}.
\]
In particular 
\[
U_h:\im(E^k_{\omega_h,\lambda})\to \im(E^k_{h,\lambda})
\]
is an isomorphism of Hilbert $\Gamma$-modules. 

To prove \eqref{equation quasi isometry spectral}, we follow the proof of \cite[Proposition 5.1]{ES}. The statement cannot be applied directly since the spectral density function $N_{\omega_h,\Gamma}^k$ is not naturally induced by a $\Gamma$-invariant Riemannian metric on $\widetilde M$. However, the proof is achieved via \cite[Lemma 5.3]{Hi}, which can be applied since $\lla,\rra$ and $\lla,\rra_{\omega_h}$ induce the same topology on $L^2\Lambda^k\widetilde M$, as required in \cite[Section 5.1]{Hi}.
\end{proof}

We recall that a non-negative self-adjoint operator $D$ on a Hilbert space $H$ has \emph{entirely positive spectrum} if $\inf(\spec(D))>0$. In the same setting as before, note that, for any $h\in[0,1]$, there exists $\rho>0$ such that $N_{h,\Gamma}^k(\rho)=0$ iff $(D_h^2)^k$ has entirely positive spectrum.

\begin{lemma}\label{lemma l2 frolicher spectral density function}
Let $\pi:\widetilde{M}\to M$ be a normal $\Gamma$-covering of a compact complex manifold of complex dimension $n$. Let $g$ be an arbitrary Hermitian metric on $M$ and endow $\widetilde M$ with the pull-back metric $\tilde g$. Then for all $k=0,\dots,n$ and $\tau>0$ there exists $C\in(0,1]$ such that for all $0\le\sigma<\tau$
\[
N_{1,\Gamma}^k(C\sigma)\le N_{0,\Gamma}^k(\tau).
\]
\end{lemma}
\begin{proof}
It is enough to combine the inequalities of Proposition \ref{proposition l2 frolicher spectral density function}
\[
N_{h,\Gamma}^k(\sigma)\le N_{0,\Gamma}^k(\tau)
\]
and of \eqref{equation quasi isometry spectral 2} in Proposition \ref{proposition popovici 2}
\[
N_{1,\Gamma}^k(C\sigma)\le N_{h,\Gamma}^k(\sigma).\qedhere
\]
\end{proof}

\begin{theorem}\label{theorem positive spectrum}
Let $\pi:\widetilde{M}\to M$ be a normal $\Gamma$-covering of a compact complex manifold. Let $g$ be an arbitrary Hermitian metric on $M$ and endow $\widetilde M$ with the pull-back metric $\tilde g$. If the Dolbeault Laplacian $(D_0^2)^k$ has entirely positive spectrum, then the same holds for the Hodge Laplacian $(D_1^2)^k$.
\end{theorem}
\begin{proof}
By Lemma \ref{lemma l2 frolicher spectral density function}, if there exists $\tau>0$ such that $N_{0,\Gamma}^k(\tau)=0$, then there exists $0<\sigma<\tau$ such that $N_{1,\Gamma}^k(\sigma)=0$. 
\end{proof}

By further assuming that equality holds in the $L^2$-Fr\"olicher inequality, we are able to prove a similar implication to the one of Theorem \ref{theorem positive spectrum}, replacing the positivity of the spectrum with a spectral gap at 0.

\begin{theorem}\label{theorem spectral gap}
Let $\pi:\widetilde{M}\to M$ be a normal $\Gamma$-covering of a compact complex manifold of complex dimension $n$ and assume that equality holds in \eqref{equation inequality frolicher l2 main}. Let $g$ be an arbitrary Hermitian metric on $M$ and endow $\widetilde M$ with the pull-back metric $\tilde g$. If there is a spectral gap at 0 for $(D_0^2)^k$, then the same holds for $(D_1^2)^k$.
\end{theorem}
\begin{proof}
By Proposition \ref{proposition l2 frolicher spectral density function}, for all $\tau>0$ there exists $h_\tau>0$ such that for all $0<h<h_\tau$ and $0<\sigma<\tau$ we have $N_{h,\Gamma}^k(\sigma)\le N_{0,\Gamma}^k(\tau)$. Assuming the equality in \eqref{equation inequality frolicher l2 main}, and taking into account  $N_{1,\Gamma}^k(0)=N_{h,\Gamma}^k(0)$ by Proposition \ref{proposition popovici 1}, we have
\begin{equation}\label{equation spectral gap}
N_{h,\Gamma}^k(\sigma)-N_{h,\Gamma}^k(0)\le N_{0,\Gamma}^k(\tau)-N_{0,\Gamma}^k(0).
\end{equation}
Note that $(D_0^2)^k$ has a spectral gap at 0 if and only if there exists $\tau>0$ such that the right hand side of \eqref{equation spectral gap} is zero, while $(D_h^2)^k$ has a spectral gap at 0 if and only if there exists $\sigma>0$ such that the left hand side of \eqref{equation spectral gap} is zero. In particular, it follows from \eqref{equation spectral gap} that if $(D_0^2)^k$ has a spectral gap at 0, then $(D_h^2)^k$ has a spectral gap at 0. Using again $N_{1,\Gamma}^k(0)=N_{h,\Gamma}^k(0)$ together with \eqref{equation quasi isometry spectral}, \textit{i.e.}, the fact that the spectral density functions $N_{1,\Gamma}^k$ and $N_{h,\Gamma}^k$ are dilatationally equivalent, we obtain that $(D_h^2)^k$ has a spectral gap at 0 if and only if $(D_1^2)^k$ has a spectral gap at 0. This completes the proof.
\end{proof}

\subsection{$L^2$ Kodaira-Spencer inequality}

We end this section by proving another Fr\"olicher-type inequality of $L^2$ invariants on 2-forms.
Given a compact complex manifold $M$, in \cite[Lemma 4.1]{KM} (cf. \cite[Lemme 3.3]{S}) it is proved
\begin{equation}\label{equation inequality kodaira spencer}
b^2(M)\le 2h^{0,2}_\delbar(M)+h^{1,1}_{BC}(M),
\end{equation}
which is an equality when $M$ is K\"ahler.
Given $M=M_0$, \eqref{equation inequality kodaira spencer} is then used, together with the upper-semi-continuity of $2h^{0,2}_\delbar(M_t)$ and $h^{1,1}_{BC}(M_t)$ along a small deformation of the complex structure $M_t$, to prove that small deformations of compact K\"ahler manifolds are still K\"ahler.

We generalise \eqref{equation inequality kodaira spencer} to the $L^2$ setting. The $L^2$ Bott-Chern number $h^{1,1}_{{BC},\Gamma}(M)$ will be precisely defined in the proof.
\begin{theorem}\label{theorem kodaira spencer l2}
Given a normal $\Gamma$-covering $\pi:\widetilde M\to M$ of a compact complex manifold $M$, we have
\begin{equation*}\label{equation l2 inequality kodaira spencer}
b^2_\Gamma(M)\le 2h^{0,2}_{\delbar,\Gamma}(M)+h^{1,1}_{{BC},\Gamma}(M),
\end{equation*}
which is an equality if $M$ is K\"ahler.
\end{theorem}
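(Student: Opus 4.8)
The plan is to generalise the classical Kodaira--Spencer argument of \cite[Lemma 4.1]{KM} (cf. \cite[Lemme 3.3]{S}) to the Hilbert $\Gamma$-module setting, replacing the classical ``dimension of the kernel plus dimension of the image'' bookkeeping with the additivity of the Von Neumann dimension over weakly exact sequences (Proposition \ref{proposition properties von neumann dimension}). First I would define the reduced $L^2$ cohomologies on $\widetilde M$ as Hilbert $\Gamma$-modules: besides $L^2 \bar H^2_{dR,\Gamma}(M)$, $L^2 \bar H^{0,2}_{\delbar,\Gamma}(M)$ and $L^2 \bar H^{2,0}_{\del,\Gamma}(M)$, I set $h^{1,1}_{BC,\Gamma}(M):=\dim_\Gamma L^2\bar H^{1,1}_{BC,\Gamma}(M)$ with $L^2\bar H^{1,1}_{BC,\Gamma}(M):=(\ker\del\cap\ker\delbar\cap L^2\Lambda^{1,1}\widetilde M)/\overline{\im\del\delbar}$, which is well defined and of finite $\Gamma$-dimension since $\del\delbar$ has coinciding minimal and maximal extensions by \cite{HP}. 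Each of these spaces is identified, via the identity, with the corresponding space of harmonic forms, so all the relevant Von Neumann dimensions are finite.

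Next I would build the two morphisms surrounding the de Rham term. The orthogonal projections onto bidegrees $(2,0)$ and $(0,2)$ are bounded and $\Gamma$-equivariant; on a $d$-closed $2$-form the $(0,2)$-component is $\delbar$-closed and the $(2,0)$-component is $\del$-closed, while on a form $d\eta$ with $\eta$ smooth and compactly supported these components equal $\delbar\eta^{0,1}$ and $\del\eta^{1,0}$. Since smooth compactly supported forms are a core, the projections carry $\overline{\im d}$ into $\overline{\im\delbar}$ and $\overline{\im\del}$, hence descend to a morphism of Hilbert $\Gamma$-modules
\[
\Phi:L^2 \bar H^2_{dR,\Gamma}(M)\to L^2 \bar H^{0,2}_{\delbar,\Gamma}(M)\oplus L^2 \bar H^{2,0}_{\del,\Gamma}(M).
\]
Symmetrically, a $d$-closed $(1,1)$-form is automatically $\del$- and $\delbar$-closed and $\im\del\delbar\subseteq\overline{\im d}$, so the inclusion $L^2\Lambda^{1,1}\hookrightarrow L^2\Lambda^2_\C$ induces a morphism $\iota:L^2\bar H^{1,1}_{BC,\Gamma}(M)\to L^2 \bar H^2_{dR,\Gamma}(M)$. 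Additivity of $\dim_\Gamma$ on the weakly exact sequence $0\to\ker\Phi\to L^2 \bar H^2_{dR,\Gamma}(M)\to\overline{\im\Phi}\to0$ then gives $b^2_\Gamma(M)=\dim_\Gamma\ker\Phi+\dim_\Gamma\overline{\im\Phi}$. As $\dim_\Gamma\overline{\im\Phi}\le h^{0,2}_{\delbar,\Gamma}(M)+h^{2,0}_{\del,\Gamma}(M)$ and complex conjugation is a $\Gamma$-equivariant conjugate-linear isometry $\mathcal{H}^{2,0}_\del\cong\mathcal{H}^{0,2}_\delbar$, whence $h^{2,0}_{\del,\Gamma}(M)=h^{0,2}_{\delbar,\Gamma}(M)$, it only remains to prove $\dim_\Gamma\ker\Phi\le h^{1,1}_{BC,\Gamma}(M)$.

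This last bound is the heart of the matter and the step I expect to be the main obstacle. I would obtain it by proving weak exactness at the middle, $\ker\Phi=\overline{\im\iota}$: the inclusion $\overline{\im\iota}\subseteq\ker\Phi$ is immediate, and then $\dim_\Gamma\ker\Phi=\dim_\Gamma\overline{\im\iota}\le h^{1,1}_{BC,\Gamma}(M)$ by monotonicity of the Von Neumann dimension. The reverse inclusion $\ker\Phi\subseteq\overline{\im\iota}$ is the $L^2$ version of the classical fact that a de Rham class with exact $(0,2)$- and $(2,0)$-components admits a $d$-closed representative of pure type $(1,1)$: a $d$-closed representative $\eta$ of a class in $\ker\Phi$ satisfies $\eta^{0,2}\in\overline{\im\delbar}$ and $\eta^{2,0}\in\overline{\im\del}$, and one wishes to correct $\eta$ to $\eta-d\beta-d\gamma$ with $\delbar\beta\approx\eta^{0,2}$, $\del\gamma\approx\eta^{2,0}$. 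The difficulty, typical of the $L^2$ setting emphasised throughout the paper, is that $\im\del$ and $\im\delbar$ need not be closed, so the corrections exist only approximately and the transverse derivatives $\del\beta_n$, $\delbar\gamma_n$ need not converge; the class therefore lands only in the \emph{closure} $\overline{\im\iota}$. I plan to handle this precisely as the Varouchas-type weakly exact sequences were handled in \cite{HP}, approximating by smooth compactly supported forms and passing to closures of images, so that weak exactness at the middle is established at the level of reduced cohomology without ever solving the equations exactly.

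Finally, for the equality when $M$ is Kähler I would endow $\widetilde M$ with the pull-back Kähler metric and use the $L^2$ Kähler identities: $\Delta_d=2\Delta_\delbar$ preserves bidegree, so that $\mathcal{H}^2_d=\mathcal{H}^{2,0}_\delbar\oplus\mathcal{H}^{1,1}_\delbar\oplus\mathcal{H}^{0,2}_\delbar$ and all $(1,1)$-Laplacians coincide, giving $b^2_\Gamma(M)=h^{2,0}_{\delbar,\Gamma}(M)+h^{1,1}_{\delbar,\Gamma}(M)+h^{0,2}_{\delbar,\Gamma}(M)=2h^{0,2}_{\delbar,\Gamma}(M)+h^{1,1}_{BC,\Gamma}(M)$; here $\Phi$ is surjective and $\iota$ injective, so the two inequalities above are in fact equalities.
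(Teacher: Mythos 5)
Your proposal is correct and follows essentially the same route as the paper: both arguments rest on the $L^2$ reduced version of the Schweitzer/Kodaira--Spencer sequence $0\to L^2\bar H^{1,1}_\Gamma\to L^2\bar H^{1,1}_{BC,\Gamma}\to L^2\bar H^2_{dR,\Gamma}\to \c{L^2\bar H^{0,2}_{\delbar,\Gamma}}\oplus L^2\bar H^{0,2}_{\delbar,\Gamma}\to\coker\to0$, identified with harmonic spaces, together with additivity of $\dim_\Gamma$ over weakly exact sequences; your kernel/image bookkeeping for $\Phi$ and $\iota$ is just this five-term sequence unpacked, and the one genuinely delicate point you single out (weak exactness at the de Rham term) is handled by the paper exactly as you propose, by deferring to the methods of \cite{HP} and \cite[Lemme 3.3]{S}.
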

\begin{proof}
We follow the same strategy of \cite[Remark 9.9, Proposition 9.10]{HP} but for the exact sequence of \cite[Lemme 3.3]{S} instead of the Varouchas exact sequences. 

First note that by Corollary \ref{corollary shubin essentially self adjoint} and \cite[Theorem 7.6]{HP}, the minimal and maximal closed extensions of $\del\delbar$ and $\delbar^*\del^*$ coincide; therefore we will again abuse notation and denote these unique operators by the same symbols $\del\delbar$ and $\delbar^*\del^*$. Let us set
\begin{align*}
L^2\bar H^{1,1}_\Gamma:=\frac{L^2\Lambda^{1,1}\widetilde M\cap\c{\im d}}{\c{\im\del\delbar}},&& L^2\bar H^{p,q}_{BC,\Gamma}:=\frac{L^2\Lambda^{p,q}\widetilde M\cap\ker d}{\c{\im\del\delbar}}.
\end{align*}
Then the following is an exact sequence
\begin{equation*}\label{equation exact sequence KS}
0\to L^2\bar H^{1,1}_\Gamma\to L^2\bar H^{1,1}_{BC,\Gamma}\to L^2\bar H^2_{dR,\Gamma}\overset{u}{\to} \c{L^2\bar H^{0,2}_{\delbar,\Gamma}}\oplus L^2\bar H^{0,2}_{\delbar,\Gamma}\to \coker u\to 0,
\end{equation*}
where the first three maps are induced by inclusions and the last three maps are induced by projections. There are isomorphisms, induced by the identity, from suitable Hilbert $\Gamma$-modules and the spaces in the above sequence:
\begin{align*}
&L^2\Lambda^{1,1}\widetilde M\cap\c{\im d}\cap\ker\delbar^*\del^*\simeq L^2\bar H^{1,1}_\Gamma, && L^2\Lambda^{1,1}\widetilde M\cap\ker d\cap\ker\delbar^*\del^*\simeq L^2\bar H^{1,1}_{BC,\Gamma},\\
&L^2\Lambda^{2}_\C\widetilde M\cap\ker d\cap\ker d^*\simeq L^2\bar H^{2}_{dR,\Gamma}, && L^2\Lambda^{0,2}\widetilde M\cap\ker \delbar\cap\ker\delbar^*\simeq L^2\bar H^{0,2}_{\delbar,\Gamma}.
\end{align*}
Let us call these Hilbert $\Gamma$-modules respectively $L^2\H^{1,1}_{\Gamma}$, $L^2\H^{1,1}_{BC,\Gamma}$, $L^2\H^{2}_{dR,\Gamma}$ and $L^2\H^{0,2}_{\delbar,\Gamma}$. It follows that the following is a commutative diagram between sequences
\begin{equation*}
\begin{tikzcd}[row sep=small, column sep=small]
0\arrow[r]&L^2\H^{1,1}_{\Gamma}\arrow[r]\arrow[d,"\simeq"]&L^2\H^{1,1}_{BC,\Gamma}\arrow[r,"r"]\arrow[d,"\simeq"]&L^2\H^{2}_{dR,\Gamma}\arrow[r,"s"]\arrow[d,"{\simeq}"]&\c{L^2\H^{0,2}_{\delbar,\Gamma}}\oplus L^2\H^{0,2}_{\delbar,\Gamma}\arrow[r]\arrow[d,"{\simeq}"]&\coker s\arrow[r]\arrow[d,"\simeq"]&0\\
0\arrow[r]&L^2\bar H^{1,1}_\Gamma\arrow[r]&L^2\bar H^{1,1}_{BC,\Gamma}\arrow[r]&L^2\bar H^2_{dR,\Gamma}\arrow[r,"u"]&\c{L^2\bar H^{0,2}_{\delbar,\Gamma}}\oplus L^2\bar H^{0,2}_{\delbar,\Gamma}\arrow[r]&\coker u\arrow[r]&0
\end{tikzcd}
\end{equation*}
where in the top line the first two maps are inclusions, the last two maps are projections, and $r$ and $s$ are defined as follows. Take $\alpha\in L^2\H^{2}_{dR,\Gamma}$ and decompose it by bidegrees as $\alpha=\alpha^{2,0}+\alpha^{1,1}+\alpha^{0,2}$, then $s(\alpha):=P_{L^2\H^{\bullet,\bullet}_{\delbar,\Gamma}}(\alpha^{2,0}+\alpha^{0,2})$, where $P_{L^2\H^{\bullet,\bullet}_{\delbar,\Gamma}}$ denotes the orthogonal projection given by the $L^2$ Hodge decompositions of Section \ref{section preliminaries}. Similarly, the map $r$ is defined as the orthogonal projection on $L^2\H^{2}_{dR,\Gamma}$ restricted to $L^2\H^{1,1}_{BC,\Gamma}$.
We can verify that the top line is a sequence of maps of Hilbert $\Gamma$-modules, which is also exact since the bottom line is exact and the diagram is commutative. Therefore by a generalisation of Proposition \ref{proposition properties von neumann dimension} (cf. \cite[Lemma 9.6]{HP})
\begin{align*}
\dim_\Gamma(L^2\H^{1,1}_{\Gamma})
+\dim_\Gamma(L^2\H^{2}_{dR,\Gamma})
+\dim_\Gamma(\coker u')=\\
=\dim_\Gamma(L^2\H^{1,1}_{BC,\Gamma})
+\dim_\Gamma(\c{L^2\H^{0,2}_{\delbar,\Gamma}}\oplus L^2\H^{0,2}_{\delbar,\Gamma})
\end{align*}
from which the inequality of the statement follows directly. If $M$ is K\"ahler, the equality follows from \cite[Theorem 8.4]{HP}.
\end{proof}

\begin{corollary}
Given a compact complex manifold $M$, we have
\begin{equation*}
b^2(M)\le 2h^{0,2}_{\delbar}(M)+h^{1,1}_{{BC}}(M),
\end{equation*}
which is an equality if $M$ is K\"ahler.
\end{corollary}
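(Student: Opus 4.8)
The plan is to derive this corollary as the immediate specialisation of Theorem~\ref{theorem kodaira spencer l2} to the trivial covering $\pi:M\to M$, in exactly the same spirit as the earlier corollaries of this paper. First I would take $\Gamma$ to be the trivial group, so that $\widetilde M=M$ and the pull-back metric $\tilde g$ is simply a fixed Hermitian metric $g$ on $M$. As recalled just after the definition of the $\Gamma$-dimension, for the trivial covering the Von Neumann dimension coincides with the ordinary dimension of a complex vector space.

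Under this identification each $L^2$ invariant occurring in Theorem~\ref{theorem kodaira spencer l2} reduces to its classical analogue: $b^2_\Gamma(M)=b^2(M)$, $h^{0,2}_{\delbar,\Gamma}(M)=h^{0,2}_\delbar(M)$, and $h^{1,1}_{BC,\Gamma}(M)=h^{1,1}_{BC}(M)$. The only point demanding a moment's care is the last equality, since $h^{1,1}_{BC,\Gamma}$ was only defined inside the proof of Theorem~\ref{theorem kodaira spencer l2} through reduced $L^2$ cohomology and the coinciding closed extensions of $\del\delbar$. On the compact manifold $M$, however, the minimal and maximal extensions of all the relevant operators are the usual differential operators, the reduced $L^2$ cohomology spaces coincide with the ordinary Bott-Chern, Dolbeault and de Rham cohomologies, and the associated harmonic spaces are the classical ones; hence $h^{1,1}_{BC,\Gamma}(M)$ is genuinely the classical Bott-Chern number $h^{1,1}_{BC}(M)$.

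Substituting these identifications into the inequality of Theorem~\ref{theorem kodaira spencer l2} yields $b^2(M)\le 2h^{0,2}_\delbar(M)+h^{1,1}_{BC}(M)$, and the K\"ahler equality case is inherited verbatim from the corresponding statement of that theorem. Because the whole argument is a pure specialisation, I do not expect any genuine obstacle; the only nontrivial input is Theorem~\ref{theorem kodaira spencer l2} itself, together with the routine verification that the $L^2$ invariants degenerate to the classical ones for the trivial covering.
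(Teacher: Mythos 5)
Your proposal matches the paper's proof exactly: the paper also obtains this corollary by taking $\pi:M\to M$ to be the trivial covering and applying Theorem~\ref{theorem kodaira spencer l2}, relying on the fact that the Von Neumann dimension reduces to the ordinary dimension in that case. Your extra remarks on why $h^{1,1}_{BC,\Gamma}(M)$ degenerates to the classical Bott-Chern number are a reasonable elaboration of the same argument.
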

\begin{proof}
Just take $\pi:M\to M$ to be the trivial covering and apply Theorem \ref{theorem kodaira spencer l2}.
\end{proof}

\section{Open problems}\label{section open problems}

\subsection{Characterisation of a $L^2$-$\del\delbar$-Lemma}
Let us consider a normal $\Gamma$-covering $\pi:\widetilde M\to M$ of a compact complex manifold $M$.
Combining inequality \eqref{equation l2 inequality hodge abc} with Theorem \ref{theorem l2 frolicher}, we find
\begin{equation}\label{equation intro l2 inequality betti abc} 2b^k_\Gamma(M)\le\sum_{p+q=k}h^{p,q}_{A,\Gamma}(M)+h^{p,q}_{BC,\Gamma}(M).
\end{equation}
Note that, when $M$ is K\"ahler, then by \cite[Theorem 8.4]{HP} we have equality in \eqref{equation intro l2 inequality betti abc}. More precisely, if the maps in the diagram of \cite[Proposition 7.14]{HP} (it is a generalisation of diagram \eqref{equation diagram maps cohomology}) are isomorphisms, then we have equality in \eqref{equation intro l2 inequality betti abc}.
As discussed in the introduction, equality in \eqref{equation angella tomassini inequality betti abc} on a compact complex manifold is equivalent to the $\del\delbar$-Lemma.
\begin{problem}
    Is equality in \eqref{equation intro l2 inequality betti abc} equivalent to any sort of $L^2$-$\del\delbar$-Lemma? \textit{E.g.}, is it equivalent to the fact that the maps in the diagram of \cite[Proposition 7.14]{HP} are isomorphisms?
\end{problem}

%We believe that equality in \eqref{equation intro l2 inequality betti abc} should be somehow related to maps being isomorphisms in a generalisation of diagram \eqref{equation diagram maps cohomology} for reduced cohomologies, namely the diagram of \cite[Proposition 7.14]{HP}. 

\subsection{Upper-semi-continuity of $L^2$ complex invariants along deformations}\label{subsection deformations}
Given a compact complex manifold $M_0$, a family of deformations $M_t$ (we refer to \cite[Chapter 4, Section 4]{KM} for the definitions) and a family of normal $\Gamma$-coverings $\pi_t:\widetilde M_t\to M_t$, the following problem is open.

\begin{problem}
Do the $L^2$ Hodge, Aeppli and Bott-Chern numbers behave upper-semi-continuously (cf. \cite[Chapter 4, Theorem 4.3]{KM})? Namely, is it true that
\[
h^{p,q}_{\delbar,\Gamma_t}(M_t)\le h^{p,q}_{\delbar,\Gamma_0}(M_0)
\]
for $t$ small enough, and similarly for $L^2$ Aeppli and Bott-Chern numbers?
\end{problem}
It seems reasonable that Theorem \ref{theorem injective projection} may find another application in solving this problem.

\end{document}